\renewcommand{\@chapapp}{}
\newcommand{\beq}{\begin{equation}} 
\newcommand{\eeq}{\end{equation}} 
\newcommand{\bea}{\begin{aligned}}
\newcommand{\eea}{\end{aligned}}
\newcommand{\bdm}{\begin{displaymath}}
\newcommand{\edm}{\end{displaymath}}
\newcommand{\barr}{\begin{array}}
\newcommand{\earr}{\end{array}}
\newcommand{\ben}{\begin{enumerate}}
\newcommand{\een}{\end{enumerate}}
\newcommand{\bde}{\begin{description}}
\newcommand{\ede}{\end{description}}
\newtheorem{teor}{Theorem}
\newtheorem{prop}[teor]{Fact}
\newtheorem{fact}{Fact}
\newtheorem{Def}[teor]{Definition}
\newtheorem{rem}[teor]{Remark}
\newcommand{\R}{\mathbb{R}}
\newcommand{\N}{\mathbb{N}}
\newcommand{\Z}{\mathbb{Z}}
\newcommand{\PP}{\mathbb{P}}
\newcommand{\defi}{\equiv} 
\newcommand{\be}{\beta}
\newcommand{\de}{\delta}
\newcommand{\la}{\lambda}
\newcommand{\s}{\sigma}
\newcommand{\vare}{\varepsilon}
\newcommand{\1}{\mathbbm{1}}
\title{\huge \textbf{Derrida's random energy models.}  \\ \Large From spin glasses to the extremes \\ of correlated random fields. \date{} }
\author{\textsc{Nicola Kistler} \\ Frankfurt University}
\begin{document}

\frontmatter
\maketitle


\tableofcontents


%
%
%

\chapter*{Introduction} These are notes for a minicourse I gave in Marseille in the spring of 2013, while holding the 
Jean Morlet Chair at the CIRM in Luminy. The goal/hope is to convey a {\it point of view} which captures some fundamental aspects shared by a variety of problems involving the extremes of large combinatorial structures. The formulation of this point of view in terms of an abstract theory (if there is any) eludes me, so I can only proceed by means of example. The level of rigor, as well as the mathematical infrastructure are intentionally low lest the unavoidable, model-related technicalities 
burdens the exposition. The emphasis is on the underlying picture, which is simple. 

The first chapter is devoted to a cursory discussion of some models introduced by Bernard Derrida in the context of mean field spin glasses in the 1980's, the random energy model and its generalization, REM and GREM respectively. These are simple Gaussian fields with a built-in hierarchical structure with finitely many levels, a feature which can be fully exploited in their rigorous treatment; the main steps of an approach based on elementary tools are briefly described. 

The approach becomes however cumbersome, to say the least, in case the assumption of finitely many hierarchies is not met, such is the case of the paradigmatic (Gaussian) hierarchical field, also known as the directed polymer on Cayley trees. For the latter, the main steps of a {\it multiscale refinement of the 2nd moment method} are worked out in the second chapter. The method seems to be new, although it is really just a neat reformulation of a well known tool. It can be applied with little modifications to a number of models which are neither Gaussian, such as certain issues of percolation in high dimensions, nor exactly hierarchical, such as the 2-dim Gaussian free field. More recently, the method has played a fundamental role in the study of two-dimensional cover times (a setting which is neither Gaussian, nor exactly hierarchical). This is briefly discussed in the third chapter, where the main steps of a general recipe are also laid out. 
The steps behind the multiscale refinement of the 2nd moment method are elementary, but  they conceptually rest on the deep insights provided by the work of theoretical physicists (Parisi, Derrida, to name a few) in spin glasses,  which one can vaguely summarize as follows: correlations play a role only at microscopic scales. The refinement also implements the related idea that hierarchical fields can be used as an approximation tool. Only the level of approximation matters: the greater it gets, the larger the number of scales one has to introduce. 
There is some reason to believe that this is a good way to address models falling in the class of the random energy model, i.e. with the simplest non-trivial freezing transition. (Despite the simplicity of the freezing transition, rather sophisticated models are known, or conjectured, to belong to the REM-class, such is the case for the extremes of the Riemann zeta-function along the critical line.)   

Many models in the REM-class are (approximately) self-similar across scales.  This is  a crucial property, and seemingly the reason for the extremes to behave essentially as in the independent setting. {\it ''What is particularly fascinating about the situation is that the underlying physics is either scaleless or has a single scale, but the mathematical machinery uses these multiple scales.''} The quote is taken from Simon's description of the Fr\"ohlich-Spencer multiscale analysis for the problem of Anderson localization \cite{simon}, but one may as well use it as a definition of the REM-class. In order to apply the multiscale refinement of the second moment method one thus 
first needs to {\it identify} the scales. In models with evident geometries, this step is typically straightforward.  However, the situation becomes quickly challenging if no 
geometry can be easily visualized, such is the case for certain number-theoretical issues, or questions related to random matrices. The last section touches upon a procedure of {\it local projections} which allows, in a number of cases, to construct scales from first principles. 

These notes are mostly about the level of the maximum of random fields. The situation is more involved if one is interested in the {\it microscopic} properties of the systems. An effective way to discuss this goes through the extremal process. This aspect is only marginally touched. It seems that models in the REM-class "interpolate" between the classical Poisson point process (REM case), and the Poisson {\it cluster} processes first emerged in the
analysis of  branching Brownian motion, BBM for short. (It is to be expected that the law of the clusters 
is model-dependent: at this level of precision it is unreasonable to hope for universality results.) In a way which can be made fully precise, BBM lies at the boundary of the REM-class, and models with more correlations automatically fall out of this class. In this case, however, little/nothing is known. 

In a nutshell, the point of view I am trying to marshal is that Derrida's random energy models are truly fundamental objects which play an important role also {\it beyond} their original context of mean field spin glasses. They are arguably the simplest models involving the concept of {\it scales}, so it should hardly come as a surprise that methods and insights gained in the study of the REMs are useful
whenever a multiscale analysis is needed. \\

{\bf Acknowledgments.} I am indebted to Erwin Bolthausen, who taught me all I know about the random energy models. It is a pleasure to thank Louis-Pierre Arguin, David Belius, Anton Bovier, Yan V. Fyodorov and Markus Petermann for the countless discussions on the topics of these notes. This work has been supported by the German Research Council in the SFB 611, the Hausdorff Center for Mathematics in Bonn, and Aix Marseille University/CIRM in Luminy through the Chair Jean Morlet. Hospitality of the University of Montreal where part of this work was done is also gratefully acknowledged.

\mainmatter

\chapter{Derrida's Random Energy Models}
The {\it random energy model}, the REM, and its generalization,  the GREM, have been introduced by Derrida \cite{derrida_rem, derrida_grem} in the 80's in order to shed some light on the mysteries of the Parisi theory for mean field spin glasses; within this context, 
the REMs have provided invaluable inputs ever since. 
On the other hand, recent remarkable advances in different fields show that the importance of the REMs goes well beyond the domain of spin glasses. In particular, methods and insights developed in the study of the REMs have been recently succesfully applied to the study of, e.g.
\begin{itemize} 
\item branching diffusions (see the lecture notes of Bovier \cite{anton}, and the review by Gou\'{e}r\'{e} \cite{gouere}, and references therein),
\item 2-dim Gaussian free field and more generally log-correlated random fields (see the lecture notes by Zeitouni \cite{zeitouni} and references therein),
\item  random matrices and number theory (Fyodorov, Hiary, and Keating \cite{fyodorov_keating_hiary}),
\item cover times (see e.g. \cite{dprz, beliusk}, and \cite{zeitouni} for a more exhaustive list of references)
\end{itemize}

Although the situation is only partially understood, the REM seems to be the foremost representative of a {\it universality class}. In spin glass terminology one may call this the 1-step replica symmetry breaking class, or 1RSB class for short.  Due to the simplicity of the model, the previous sentence might raise skepticism: why should complex models such as  those listed above fall into the universality class of the REM?  A possible answer simply restates Simon's quote: in many models, correlations are {\it strong} enough for the mathematical treatment to require a multiscale analysis, but {\it weak} enough so that the ensuing "physics" is single scale, as in the case of the REM.

\section{The REM}
Consider  a probability space $(\Omega, \mathcal F, \PP)$ and $N\in \N$. 

\begin{Def} The REM is a Gaussian random field 
$\left\{X_{\s}, \s= 1, \dots, 2^N\right\}$ where the $X's$ are independent, identically distributed centered Gaussian random variables with variance $N$. 
\end{Def}

The labels $\s$ will be referred to as {\it configurations}, whereas the values $X_\s$ as the {\it energies} of the configuration. \\

In these notes we are first and foremost concerned with the asymptotics of the extremes of random fields, in particular, the maximum 
\beq 
M_N \defi \max_{\s=1, \dots, 2^N} X_\s, \quad \text{as} \; N \to \infty.
\eeq

The behavior of the maximum of a random field is a question of fundamental importance in probability.  In the case of the REM the issue is particularly simple, due to the independence of the underlying random variables. With models in mind where this independence is no longer available, one  
seeks a flexible approach. For this, the so-called second moment method is not only  particularly efficient, but also one of the very few (general) tools available. Here are the main steps  in a cursory way. The reader interested in details is referred to the Oberwolfach Lecture Notes of Bolthausen \cite{bolthausen_sznitman}.

\subsection{The second moment method for the leading order of the maximum} \label{rem machinery}
Some notation used throughout the notes: $\cong$ stands for {\it equality on exponential scale}, namely $a_N\cong b_N$ if $a_N e^{-\epsilon N} \leq b_N \leq a_N e^{\epsilon N}$ for large enough $N$ and small enough $\epsilon >0$. Analogously, $\lesssim$ will denote {\it inequality on exponential scale}. Finally, $a_N \sim b_N$ if $a_N/b_N$ tends to a positive constant in the limit $N\to \infty$.\\

Let $\la \in \R$ and consider the counting random variable
\beq
\mathcal N_N(\la) \defi \sharp \left\{ \s = 1,\dots, 2^N: \; X_\s \geq \la N \right\}.
\eeq
The idea of the second moment method is to compare mean and variance of this counting random variable. The mean is easily derived thanks to standard large deviations estimates for Gaussians:  
\beq \label{one}
\mathbb{E} \mathcal N_N(\la) = 2^N \PP\left[ X_\s \geq \la N \right] \cong 2^{N} \exp\left( - \frac{\la^2}{2} N\right) = \exp N \left( \log 2 -\frac{\la^2}{2} \right)
\eeq

From \eqref{one}, and Borel-Cantelli, we immediately derive that $\mathcal N_N(\la) = 0$, almost surely for large enough $N$ {\it as soon as} $\la >\be_c \defi \sqrt{2\log 2}$. (The value $\be_c$ will constantly show up throughout the notes.)
In other words, $\PP$-almost surely,
\beq 
\frac{1}{N} \log \mathcal N_N(\la) \to -\infty, \quad \text{for}\; \la > \be_c.
\eeq

We next address the second moment of the counting random variable. Using the independence of $X_\s$ and $X_{\s'}$ for  $\s\neq \s'$, we get
\beq \bea
\mathbb{E} \mathcal N_N(\la)^2 & = \sum_{\s, \s'} \PP\left[ X_\s \cong \la N, X_{\s'} \geq \la N \right] \\
& = \sum_{\s=1}^{2^N} \PP\left[ X_\s \geq \la N\right] + \sum_{\s \neq \s'} \PP\left[ X_\s \geq \la N\right]^2 \\
& \cong 2^N \exp\left( - \frac{\la^2}{2} N\right) + 2^N(2^N -1)  \exp\left( - \la^2 N\right) .
\eea \eeq
Combining with \eqref{one}, we immediately deduce that 
\beq 
\frac{\mathbb{E} \mathcal N_N(\la)^2 - \mathbb{E}\left[\mathcal N_N(\la)\right]^2}{\mathbb{E}\left[\mathcal N_N(\la)\right]^2} \leq  \exp N \left( \frac{\la^2}{2}-\log 2\right),
\eeq
and this is vanishing {\it exponentially fast} as soon as $\la < \be_c$; a straightforward application of the Chebyshev inequality then implies the "quenched = annealed property", namely that one can bring the expectation inside of the logarithm: 
\beq \label{two}
\lim_{N\to \infty} \frac{1}{N} \log \mathcal N_N(\la) =  \lim_{N\to \infty} \frac{1}{N} \log \mathbb{E} \mathcal N_N(\la) = \log 2 - \frac{\la^2}{2} 
\eeq
almost surely, as soon as $\la < \be_c$. 

Summarizing, we have obtained the following almost sure statement:
\beq 
\lim_{N\to \infty} \frac{1}{N} \log \mathcal N_N(\la) = \begin{cases}
\log 2 - \frac{\la^2}{2} & \text{if}\; \la \leq \be_c\\
- \infty & \text{otherwise}.
\end{cases}
\eeq
(The case $\la = \be_c$ can be obtained by means of continuity arguments.) In other words, in the limit of large $N$, no configurations $\s$ will be found such that $X_\s > \be_c N$, whereas there are exponentially many configurations reaching heights which are less than $\be_c N$. Naturally, this threshold is our candidate for the {\it leading order} of the maximum of the random field, i.e. we expect 
\beq 
\max_\s X_\s =  \be_c N + o(N), \quad N\to \infty.
\eeq
with overwhelming probability. 

\subsection{The subleading order of the maximum - matching} \label{matching}
Once the leading order of the level of the maximum has been identified, one may be interested in 
the lower order corrections, and the finer properties of the system. In case of the REM this is of course well known from extreme value theory, but I present here an approach which allows to make educated guesses also in less standard situations. I will refer to this as the {\it method of matching}.

Let $a_N = \be_c N + \omega_N$ where $\omega_N = o(N)$ as $N\to \infty$, and consider the {\it extremal process}
\beq 
\Xi_N \defi \sum_{\s = 1}^{2^N} \de_{X_\s - a_N}
\eeq
This a random Radon measure which counts how many points of the collection 
$\{X_\s-a_N\}_\s$ fall into a given subset, i.e. for any compact 
$A \subset \R$ and $n\in \N$,
\beq 
\Xi_N(A) = n \Leftrightarrow \sharp\{ \s = 1 \dots 2^N: X_\sigma \in A + a_N \} = n.
\eeq
Now, if $a_N$ is the level of the maximum,  we should {\it find} configurations at these heights. 
In other words, it seems reasonable to require that 
$\Xi_N(A)$ is of order one in the large $N$-limit. Since this is hardly tractable from a quantitative point of view, we shall require that the {\it mean}, i.e. $\mathbb{E}\left[ \Xi_N(A) \right]$, remains of order one. This can be achieved by "matching constants". In fact,  
\beq \label{find}
\mathbb{E} \left[\Xi_N(A)\right] = 2^N \int_{A} \exp\left( - \frac{(x+a_N)^2}{2N}\right) \frac{dx}{\sqrt{2\pi N}}.
\eeq
Expanding the square using that $a_N = \be_c N + o(N)$ we have, asymptotically and for $x$ in compacts, 
\beq 
\exp\left( - \frac{(x+a_N)^2}{2N}\right) \sim 2^{-N} \exp\left( -\be_c \omega_N -\be_c x \right) \quad (N\to \infty).
\eeq
Since the term $2^{-N}$ matches the exponential factor in \eqref{find}, the latter remains asymptotically of order one provided $\omega_N$ compensates the $\sqrt{N}$-term appearing in the denominator of the Gaussian density, i.e. we shall require that 
\beq
\frac{1}{\sqrt{ N}} \exp\left( -\omega_N \be_c\right) \sim 1, \quad N\to \infty,
\eeq
which implies
\beq 
\omega_N = -\frac{1}{2\be_c} \log N.
\eeq
This turns out to be the correct choice. 

\begin{fact} Consider the extremal process of the REM, 
\beq
\Xi_N = \sum_\s \de_{X_\s -a_N}, \qquad  \label{max_rem}
a_N \defi \be_c N - \frac{1}{2 \be_c} \log N
\eeq
Then $\Xi_N$ converges weakly to a Poisson Point process $\Xi \defi (\xi_i)_{i\in \N}$ with intensity measure $\kappa e^{-\be_c x}dx$ for some $\kappa>0$ a numerical constant. In particular, the law of the maximum of the REM converges weakly, upon recentering, to a Gumbel distribution.
\end{fact}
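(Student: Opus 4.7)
The plan is to establish the weak convergence of $\Xi_N$ to the Poisson point process through the classical Kallenberg characterization: for simple point processes without accumulation, it suffices to verify convergence of the intensity measure on a sufficient class of bounded sets, together with convergence of the void probabilities on the same class. Both steps exploit the REM's defining feature, namely the independence of the $X_\s$'s, which is precisely what makes the model tractable at this level of precision.

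For the intensity I would simply make the ``matching'' heuristic in Section~\ref{matching} rigorous. Writing $a_N = \be_c N - (2\be_c)^{-1}\log N$ and expanding the Gaussian exponent at $x + a_N$, the leading term $\be_c^2 N/2 = N\log 2$ cancels the $2^N$ prefactor, the subleading $-\tfrac{1}{2}\log N$ contribution produces a factor $\sqrt{N}$ that cancels the $(2\pi N)^{-1/2}$ normalization up to the constant $(2\pi)^{-1/2}$, and the cross term yields the exponential factor $e^{-\be_c x}$. Locally uniformly in $x$ on compacts this gives, for $-\infty < a < b < \infty$,
\beq
\mathbb{E}\,\Xi_N([a,b]) = 2^N \int_a^b \frac{e^{-(x+a_N)^2/(2N)}}{\sqrt{2\pi N}}\, dx \; \longrightarrow \; \int_a^b \kappa\, e^{-\be_c x}\, dx,
\eeq
with the numerical constant $\kappa = 1/\sqrt{2\pi}$.

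For the void probabilities, independence reduces the calculation to a product: for any bounded Borel set $A \subset \R$,
\beq
\PP\left[\Xi_N(A) = 0\right] = \prod_{\s=1}^{2^N} \PP\left[X_\s \notin A + a_N\right] = (1 - p_N)^{2^N},
\eeq
where $p_N \defi \PP\left[X_\s \in A + a_N\right]$. By the intensity calculation $2^N p_N \to \mu(A) \defi \int_A \kappa\, e^{-\be_c x}\, dx$, hence $p_N \to 0$ and $(1 - p_N)^{2^N} \to e^{-\mu(A)}$, which is precisely the void probability of a PPP with intensity $\mu$. Kallenberg's theorem then delivers the asserted weak convergence. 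The Gumbel statement for the maximum is an immediate corollary: since $\{M_N - a_N \leq t\} = \{\Xi_N((t,\infty)) = 0\}$, one concludes
\beq
\PP\left[M_N - a_N \leq t\right] \longrightarrow \exp\left(-\frac{\kappa}{\be_c}\, e^{-\be_c t}\right),
\eeq
which is a Gumbel law up to a deterministic shift.

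The only genuine obstacle is careful bookkeeping in the intensity asymptotics: one must verify that the expansion of the Gaussian density is uniform for $x$ in compacts and that the polylogarithmic errors absorbed into $o(1)$ in the centering $a_N$ are genuinely negligible at the prelimit level. Beyond this, everything is driven by the factorization afforded by independence. This is precisely the crutch that disappears in all the correlated models discussed in the subsequent chapters, where the multiscale refinement of the second moment method will have to emulate the same dichotomy (finite intensity versus Poissonian voids) one scale at a time.
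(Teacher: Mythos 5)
Your argument is correct, and it takes a different route from the text: the paper proves the Fact by computing the Laplace functional, writing $\mathbb{E}\bigl[\exp\bigl(-\sum_\s \phi(X_\s-a_N)\bigr)\bigr]$ as a $2^N$-th power, using $\mathbb{E}[e^{-\phi}]=1-\mathbb{E}[1-e^{-\phi}]$, and showing that the choice of $a_N$ turns the Gaussian integral into $2^{-N}\int(1-e^{-\phi(x)})e^{-\be_c x}dx$, so the limit is read off as the Laplace transform of the PPP; you instead invoke Kallenberg's criterion, checking convergence of intensities on bounded sets and of void probabilities, where independence again reduces everything to the same single-variable Gaussian asymptotics. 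The underlying computation (cancellation of $2^N$ by the leading exponent, of $\sqrt{N}$ by the $\log N$ correction in $a_N$, and the cross term producing $e^{-\be_c x}$) is identical in both proofs; the difference is the key lemma. The Laplace-functional route is self-contained and treats all test functions at once, while your route outsources the abstract step to Kallenberg's theorem — for which you should note explicitly that the limiting PPP is simple (its intensity is non-atomic), since simplicity of the limit is a hypothesis of that criterion — but in exchange only needs indicator test functions, pins down the constant $\kappa=1/\sqrt{2\pi}$, and yields the Gumbel law directly from the void probability of $(t,\infty)$. On that last point, be aware that the half-line is not a bounded set, so the Gumbel statement does not follow from the weak convergence itself; it does follow from your product formula $(1-p_N)^{2^N}$ applied to $A=(t,\infty)$ together with the standard Gaussian tail asymptotic giving $2^N\,\PP[X_1>a_N+t]\to(\kappa/\be_c)e^{-\be_c t}$, which is the one-line estimate worth writing down to close the argument.
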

\begin{proof}
This is of course well known. The proof is short and instructive, so we may as well give it here. In order to prove weak convergence, we may consider the convergence of Laplace functionals. Let therefore $\phi: \R \to \R_+$ be Borel-measurable, with compact support. It holds 
\beq \bea \label{laplace}
\mathbb{E}\left[ \exp\left( -\sum_\s \phi\left( X_\s - a_N\right)\right) \right] & = 
\mathbb{E}\left[ \exp\left( - \phi\left( X_1 - a_N\right)\right) \right]^{2^N}.
\eea \eeq
The trick (which ultimately rests on the fact that
$\phi(X_1-a_N)\equiv 0$ if $X_1-a_N$ falls out of the compact support of $\phi$) consists now in writing 
\beq 
\mathbb{E}\left[ e^{ - \phi} \right] = 1 - \mathbb{E}\left[ \left\{ 1- e^{-\phi}  \right\}\right].
\eeq
Using this, \eqref{laplace} equals 
\beq \label{laplace_two}
\left[ 1 - \int \left\{ 1- e^{-\phi(x)}\right\}   \exp\left( - \frac{(x+a_N)^2}{2N} \right) \frac{dx}{\sqrt{2\pi N}} \right]^{2^N}
\eeq
But thanks to the choice of $a_N$, for the Gaussian integral we have that
\beq \bea
& \int \left\{ 1- e^{-\phi(x)}\right\}   \exp\left( - \frac{(x+a_N)^2}{2N} \frac{dx}{\sqrt{2\pi N}} \right) \sim 2^{-N} \int \left\{ 1- e^{-\phi(x)}\right\} e^{-\be_c x}dx .
\eea \eeq
Plugging this in \eqref{laplace_two} and taking the limit $N\to \infty$ we indeed recover the Laplace 
transform of a Poisson point process.
\end{proof}

\section{The GREM}
In \cite{derrida_grem}, Derrida introduced the {\it generalized} random energy model, GREM for short. This is a Gaussian field with a built-in hierarchical structure. One can consider models with 
an arbitrary number of hierarchies, but conceptually one doesn't gain much, so we will first discuss the case with two levels. 

\begin{Def}  In the GREM with two levels, GREM(2) for short, the configurations are indexed by two-dimensional vectors 
\beq 
\s = (\s_1, \s_2)
\eeq
where without loss of generality we assume that $\s_1 = 1,\dots, 2^{N/2}$ and $\s_2 = 1,\dots, 2^{N/2}$. To each configuration we attach a random variable according to 
\beq \label{defi_grem}
X_{\s} \defi X_{\s_1}^{(1)}  + X_{\s_1, \s_2}^{(2)}
\eeq
where the $X_{\s_1}^{(1)}, \s_1 = 1 \dots 2^{N/2}$ are independent (centered) Gaussians of variance $a_1 N$, and, for each $\s_1$, the $X_{\s_1, \s_2}, \s_2 = 1\dots 2^{N/2}$ are independent (centered) Gaussians of variance $a_2 N$. As a normalization we choose $a_1, a_2$ such that $a_1+a_2 = 1$.
The variables at each level, $X^{(1)}_\cdot$ and $X^{(2)}_\cdot$, are assumed to be independent. 
\end{Def}

The GREM with two levels is a {\it correlated} random field (contrary to the REM): the covariance is given by
\beq
q_N(\s, \tau) \defi \mathbb{E} X_{\s} X_{\tau} = \begin{cases}
0 & \text{if}\; \s_1 \neq \tau_1 \\
a_1 N & \text{if}\; \s_1 = \tau_1 \, \text{but}\, \s_2 \neq \tau_2 \\
N & \text{otherwise}.
\end{cases} 
\eeq
In spin glass terminology, $q_N(\s, \tau)/N$ is the {\it overlap} of the configurations $\s$ 
and $\tau$. 

Under the light of (say) Slepian's Lemma we may expect correlations to have an impact 
on the level of the maximum, and in general, on the "geometry of extremes". In particular, since 
we are dealing with positive correlations, we may expect the maximum of the GREM to lie lower than
in the independent setting.  This is not always the case: as we will see, for certain choices of
$a_1, a_2$ the field still manages to reach levels as high as in the REM-case. The goal is to understand this qualitatively, i.e. to understand {\it how} the field manages to overcome correlations: there is good reason to believe that certain aspects of these strategies are "universal".

\subsection{Two scenarios for the leading order of the maximum}

We are interested in the extremes of the random field defined in \eqref{defi_grem}, in particular, the maximum
\beq 
M_N \defi \max_{\s = (\s_1, \s_2)} X_\s \,. 
\eeq
A natural way to address the asymptotics of $M_N$ is to follow the approach in the REM, i.e. to study the asymptotics of the counting random variable 
\beq \label{naive}
\mathcal N_N(\la) \defi \sharp\left\{ \s = (\s_1, \s_2): X_\s \geq \la N \right\}
\eeq
for $\la \in \R$. Just as in the REM one sees that for $\la > \be_c (=\sqrt{2\log 2})$, $\mathcal N_N(\la) = 0$ eventually for large $N$. However, following the route for the second moment one soon realizes that 
\beq
\text{var}\left[\mathcal N_N(\la)\right] \ll \mathbb{E}[\mathcal N_N(\la)]^2 
\eeq
only for values $\la$ which are much smaller than $\be_c$. In other words, second moment method fails. The reason for this is however easily identified: by considering \eqref{naive} we are completely forgetting the underlying tree-like structure, and the ensuing correlations. 
A better idea is to keep track of the underlying hierarchies. This can be achieved by considering
\beq
\mathcal N_N(\la_1, \la_2) \defi \sharp \left\{ \s = (\s_1, \s_2): X_{\s_1}^{(1)} \geq \la_1 N, 
X_{\s_1, \s_2}^{(2)} \geq \la_2 N \right\},
\eeq
for $\la_1, \la_2 \in \R$. 

By linearity of the expectation and the independence of first and second level we have
\beq 
\mathbb{E} N_N(\la_1, \la_2)  \cong 2^N \exp\left( -\frac{\la_1^2}{2 a_1} N - \frac{\la_2^2}{2a_2} N\right),
\eeq
from which we easily deduce that, with overwhelming probability and for large enough $N$,
\beq \label{all}
\mathcal N_N(\la_1, \la_2) \equiv 0 \quad \text{if}\quad \frac{\la_1^2}{2 a_1} + \frac{\la_2^2}{2a_2}  > \log 2. 
\eeq
There is yet another region where $\mathcal N_N$ vanishes almost surely, and which the naive approach through \eqref{naive} misses completely. To see this, let
\beq
\mathcal N_N(\la_1) \defi \sharp \left\{ \s_1= 1 \dots 2^{N/2}: X_{\s_1}^{(1)} \geq \la_1 N, 
 \right\}
\eeq
We first observe that, almost surely, 
\beq 
\mathcal N_N(\la_1) = 0 \Rightarrow \mathcal N_N(\la_1, \la_2) = 0\,.
\eeq
But 
\beq 
\mathbb{E} \mathcal N_N(\la_1)  \cong 2^{N/2} \exp\left( -\frac{\la_1^2}{2 a_2} N \right)
\eeq
hence, by Borel-Cantelli, 
\beq \label{firstlevel}
\mathcal N_N(\la_1) = 0 \quad \text{for} \quad \frac{\la_1^2}{2a_1} > \frac{1}{2}\log 2.
\eeq
Summarizing, 
\beq \bea \label{conditions}
& \lim_{N\to \infty} \frac{1}{N} \log \mathcal N_N(\la_1, \la_2) = -\infty\\
& \hspace{2cm} \text{as soon as} \quad \frac{\la_1^2}{2a_1} > \frac{1}{2}\log 2 \quad \text{or} \quad \frac{\la_1^2}{2 a_1} + \frac{\la_2^2}{2 a_2}  > \log 2
\eea \eeq
The second moment of the counting random variable can be easily controlled by rearranging the 
ensuing sum according to the possible correlations: 
\beq \bea
\mathbb{E}\left[ \mathcal N_N(\la_1, \la_2)^2 \right] & = \sum_{\s, \tau}
\PP\left[ X_{\s_1}^{(1)} \geq \la_1 N, 
X_{\s_1, \s_2}^{(2)} \geq \la_2 N, X_{\tau_1}^{(1)} \geq \la_1 N, 
X_{\tau_1, \tau_2}^{(2)} \geq \la_2 N\right] \\
& = \sum_{\s = \tau} + \sum_{\s_1 = \tau_1, \s_2 \neq \tau_2 } +  \sum_{\s_1 \neq \tau_1} \\
& \lesssim  \mathbb{E} \left[ \mathcal N_N(\la_1, \la_2)\right] + 2^{3 N/2} \exp\left( - \frac{\la_1^2}{2 a_1} N - 
\frac{\la_2^2}{a_2} N\right)  + \mathbb{E} \left[ \mathcal N_N(\la_1, \la_2)\right]^2 \,. 
\eea \eeq
From this one deduces that 
\beq 
\frac{\text{var}\left[ \mathcal N_N(\la_1, \la_2) \right]}{
\mathbb{E} \left[ \mathcal N_N(\la_1, \la_2)\right]^2} \lesssim 
2^{-N} \exp\left( \frac{\la_1^2}{2 a_1} N + \frac{\la_2^2}{2a_2} N\right) + 2^{-N/2} \exp\left( \frac{\la_1^2}{2 a_1} N \right)
\eeq
The crucial point is that both expressions can be made exponentially small on the {\it complement} of the $(\la_1, \la_2)$-set appearing in \eqref{conditions}. In other words, 
\beq 
\frac{\text{var}\left[ \mathcal N_N(\la_1, \la_2) \right]}{
\mathbb{E} \left[ \mathcal N_N(\la_1, \la_2)\right]^2} \to 0 \quad (N\to \infty)
\eeq
exponentially fast as soon as 
\beq  \label{qa}
\frac{\la_1^2}{2 a_1} < \frac{1}{2}\log 2 \quad \text{and} \quad \frac{\la_1^2}{2 a_1} + \frac{\la_2^2}{2 a_2} < \log 2.
\eeq
By Chebyshev's inequality, this implies the "quenched = annealed" statement
\beq 
\lim_{N\to \infty} \frac{1}{N} \log \mathcal N_N(\la_1, \la_2) =\lim_{N\to \infty} \frac{1}{N} \log \mathbb{E} \mathcal N_N(\la_1, \la_2)  = \log 2 - \frac{\la_1^2}{2a_1} -\frac{\la_2^2}{2a_2},
\eeq
provided $\la_1, \la_2$ satisfy the conditions \eqref{qa}. Putting all together, 
we have obtained the following: 
\beq \label{ally}
\lim_{N\to \infty} \frac{1}{N} \log \mathcal N_N(\la_1, \la_2) 
= \begin{cases}
\log 2 - \frac{\la_1^2}{2a_1} -\frac{\la_2^2}{2a_2}, & \text{if}\; \frac{\la_1^2}{2 a_1} \leq \frac{1}{2}\log 2, \frac{\la_1^2}{2 a_1} + \frac{\la_2^2}{2a_2} \leq \log 2 \\
-\infty & \text{otherwise}.
\end{cases}
\eeq
(Equality in the side constraints can be obtained by continuity arguments.) 
In particular we see that no configurations is found  s.t. $X_\s \geq (\la_1 + \la_2) N$ as 
soon as one of the side-constraints in \eqref{ally} is violated, whereas there are exponentially 
many $\s's$ reaching heights $(\la_1 + \la_2) N$ if the side-constraints are satisfied strictly: the threshold separating these two scenarios is our natural guess for the level of the maximum. In other words, we expect that 
\beq 
M_N = m N + o(N)
\eeq
with overwhelming probability and for $m = m(a_1, a_2)$ the solution of the optimization problem 
\beq
m \defi \sup_{(\la_1, \la_2) \in \R^2} \left\{ \la_1 + \la_2: \frac{\la_1^2}{2a_1} \leq \frac{1}{2}\log 2, \frac{\la_1^2}{2 a_1} + \frac{\la_2^2}{2 a_2} \leq \log 2 \right\}
\eeq
This is a two dimensional convex optimization problem. It can be solved through Lagrange multipliers. The upshot of the elementary calculations is as follows. There are  two different scenarios, depending on the choice of $a_1, a_2$ : \\

 {\bf Case 1:} $a_1 \leq a_2$. In this case the supremum is achieved in 
\beq \label{caseone}
\la_1^{\text{max}} = \sqrt{ 2 \log 2} a_1 , \; \la_2^{\text{max}} = \sqrt{2\log 2} a_2,
\eeq
Recalling the normalization $a_1+a_2 = 1$, we would thus obtain that
\beq
\max_\s X_\s = \be_c N + o(N),
\eeq
(recall that $\be_c= \sqrt{2\log 2}$)
with overwhelming probability, {\it exactly} as in the case of the random energy model. So, our considerations suggest that correlations do not matter as long as $a_1 \leq a_2$.\\

 {\bf Case 2:} $a_1 >a _2$. In this case, the optimal $\la's$ saturate the side constraints, 
\beq  \label{casetwo}
\la_1^{\text{max}} = \sqrt{a_1 \log 2}, \; \la_2^{\text{max}} = \sqrt{a_2 \log 2},
\eeq
hence
\beq \label{strict less}
\max_\s X_\s =  (\sqrt{a_1 \log 2}+ \sqrt{a_2 \log 2})N + o(N),
\eeq
with overwhelming probability. Elementary convexity arguments show the leading order of the GREM is {\it strictly smaller} than $\be_c$,  the value for the leading order of the REM. 
In other words, our considerations suggest that correlations do matter. \\

The above guesses for the leading order of the maximum are indeed correct. I will not give a proof of this but refer the interested reader to \cite{bovier_kurkova} for details.

\subsection{Three scenarios for the subleading order of the maximum}
Since the analysis at the level of large deviations for the leading order of the maximum of the GREM leads to two 
different scenarios, one is perhaps tempted to believe that the same is true for the subleading orders, 
and the extremal process. This is not quite correct: as far as the finer properties 
of the system are concerned, one has to distinguish between {\it three} different scenarios: $a_1 < a_2$ {\it strictly}, $a_1= a_2$, and $a_1 > a_2$.
The next goal is to discuss {\it qualitatively} the different physical pictures, and the strategies adopted by the random field to achieve the extreme values. 

\subsubsection{The case $a_1>a_2$}
In this situation, the way the GREM achieves the maximal values is very natural: one looks for those $\s_1$ which maximizes the contribution on the first level of the tree. But the first level is nothing but a REM,  and one can apply the machinery from Section \ref{rem machinery} to see that $\max_{\s_1} X_{\s_1}^{(1)} \approx \sqrt{\log a_1} N$. Let us assume without loss of generality that this maximum is achieved in $\s_1 =1$. The optimal strategy is then to search for the extremal $\s_2$-configuration in the tree attached to such $\s_1= 1$; this tree being again 
a REM, one easily sees that $\max_{\s_2} X_{1, \s_2}^{(2)} \approx \sqrt{\log a_2} N$. This is consistent   with \eqref{strict less}. Under this light, it is easy to guess what the level of the maximum  should be (lower orders included): it should be given by the sum of the two corresponding maxima. More precisely, let 
\beq 
a_N^{(1)} \defi \sqrt{ a_1 \log 2 } N - \frac{a_1}{2 \sqrt{a_1 \log 2}} \log N
\eeq
and 
\beq 
a_N^{(2)} \defi \sqrt{ a_2 \log 2 } N - \frac{a_2}{2 \sqrt{a_2 \log 2}} \log N
\eeq
Remark that $a_N^{(1)}$ is the level of the maximum of the REM associated to the first level, 
whereas $a_N^{(2)}$ is the level of the maximum of any of the REM associated to the second level. 
One can then prove that the point process associated to the first level, to wit
\beq \label{firstlev}
\left(X_{\s_1}^{(1)}- a_N^{(1)} \right)_{\s_1 = 1}^{2^{N/2}}
\eeq
converges weakly to a Poisson point process with intensity measure proportional (up to irrelevant numerical constant) to $e^{-t \sqrt{\log 2/a_1}} dt$. Furthermore, for {\it given} $\s_1$, the point process associated to the second level, to wit 
\beq \label{seclev}
\left(X_{\s_1, \s_2}^{(2)}- a_N^{(2)} \right)_{\s_2 = 1}^{2^{N/2}}
\eeq
converges weakly to a Poisson point process with intensity measure proportional (up to irrelevant numerical constant) to $e^{-t \sqrt{\log 2/a_2}} dt$. Let now 
\beq 
a_N \defi a_N^{(1)} + a_N^{(2)}
\eeq
From the weak convergence of the point processes \eqref{firstlev}, \eqref{seclev} one then easily derives that the {\it full} extremal process, to wit 
\beq
\Xi_N \defi \left(X_{\s} -a_N \right)_{\s = (\s_1, \s_2)}
\eeq
converges weakly to the {\it superposition} of the two limiting objects. To formulate this precisely, 
let $\left(\xi_{i_1}^{(1)},  i_1 \in \N\right)$ be  a Poisson point process with intensity measure $e^{-t \sqrt{\log 2/a_1} } dt$ (up to irrelevant numerical constant). For $i_1\in \N$, consider a Poisson point process $\left(\xi_{i_1,i_2}^{(2)}, i_2 \in \N \right)$ with density $e^{-t \sqrt{\log 2/a_2} } dt$. (All point processes are assumed to be independent.) Finally, let 
\beq \label{derrida_ruelle}
\Xi \defi \left( \xi_{i_1}^{(1)} + \xi_{i_1, i_2}^{(2)}, i_1, i_2 \in \N \right)
\eeq

\begin{fact} $\Xi_N \to \Xi$ weakly. 
\end{fact}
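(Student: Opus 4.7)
The plan is to establish weak convergence via convergence of Laplace functionals, mirroring the proof of Fact 1 but nested across the two hierarchical levels of the GREM. Fix $\phi: \mathbb{R} \to \mathbb{R}_+$ Borel-measurable with compact support, and set $Y_{\s_1} \defi X^{(1)}_{\s_1} - a_N^{(1)}$, $Z_{\s_1, \s_2} \defi X^{(2)}_{\s_1, \s_2} - a_N^{(2)}$, so that $X_\s - a_N = Y_{\s_1} + Z_{\s_1, \s_2}$. Exploiting that the first and second levels are mutually independent, and that the second-level families $(Z_{\s_1, \cdot})$ are i.i.d.\ across $\s_1$, conditioning on the first level factorises the Laplace functional:
\begin{equation} \label{first-factor}
\mathbb{E}\left[\exp\left(-\sum_\s \phi(X_\s - a_N)\right)\right] = \bigl(\mathbb{E}[G_N(Y_1)]\bigr)^{2^{N/2}}, \quad G_N(y) \defi \mathbb{E}\left[\exp\left(-\sum_{\s_2} \phi(y + Z_{1, \s_2})\right)\right].
\end{equation}
The analogous identity for the Poisson cascade \eqref{derrida_ruelle} reads $\mathbb{E}[\exp(-\Xi(\phi))] = \exp\!\left(-\int (1 - G(y))\, \mu_1(dy)\right)$, where $G(y)$ is defined from $\xi^{(2)}$ in the same fashion and $\mu_1(dt) \propto e^{-t\sqrt{\log 2/a_1}}\, dt$ is the intensity of $\xi^{(1)}$.

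The first reduction is to apply Fact 1 to the second-level REM. The latter consists of $2^{N/2}$ i.i.d.\ Gaussians of variance $a_2 N$, equivalently a REM at scale $M = N/2$ with variance $2 a_2 M$ and critical slope $\sqrt{\log 2/a_2}$; its extremal process, centred by $a_N^{(2)}$, converges to a Poisson point process with intensity $\propto e^{-t\sqrt{\log 2/a_2}}\, dt$. Applied to the shifted (and still compactly supported) test function $\phi(y + \cdot)$, this gives $G_N(y) \to G(y)$ for every fixed $y$; continuity in $y$ of the Gaussian density for the $Z$-variables (together with uniform control of the tails in $\s_2$) upgrades this to uniform convergence on compact sets of $y$.

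With \eqref{first-factor} in hand, the theorem reduces to showing
\begin{equation} \label{key-conv}
2^{N/2}\bigl(1 - \mathbb{E}[G_N(Y_1)]\bigr) \;\xrightarrow[N \to \infty]{}\; \int (1 - G(y))\, \mu_1(dy).
\end{equation}
My plan for \eqref{key-conv} is to reuse verbatim the argument leading from \eqref{laplace} to \eqref{laplace_two} in the proof of Fact 1, with $1 - e^{-\phi(x)}$ replaced by $1 - G_N(x)$. Writing out the Gaussian density $p_{Y_1}$ and invoking the matching computations of Section \ref{matching}, on compacts of $y$ the integrand $2^{N/2}\bigl(1 - G_N(y)\bigr) p_{Y_1}(y)$ converges pointwise to $(1 - G(y))\, c\, e^{-y\sqrt{\log 2/a_1}}$ for a numerical constant $c > 0$, and dominated convergence then closes the loop --- \emph{provided} one has sufficient tail control in $y$.

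The main obstacle is precisely this tail control, since $1 - G$ is \emph{not} compactly supported. A direct application of Campbell's formula to the Poisson process $\xi^{(2)}$ yields the exact identity
\begin{equation}
1 - G(y) = 1 - \exp\!\left(-C_\phi\, e^{y\sqrt{\log 2/a_2}}\right), \qquad C_\phi \defi \int (1 - e^{-\phi(u)})\, e^{-u\sqrt{\log 2/a_2}}\, du \in (0, \infty),
\end{equation}
so $1 - G(y) \sim C_\phi\, e^{y\sqrt{\log 2/a_2}}$ as $y \to -\infty$ and saturates at $1$ as $y \to +\infty$. The integral in \eqref{key-conv} is therefore automatically finite near $+\infty$, and finite near $-\infty$ precisely when $\sqrt{\log 2/a_2} > \sqrt{\log 2/a_1}$, i.e.\ under the standing assumption $a_1 > a_2$. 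The prelimit integrals are dominated by the same exponential envelope uniformly in $N$ (using Gaussian asymptotics for $p_{Y_1}$ together with $1 - G_N \leq 1$), which legitimises dominated convergence. It is instructive to notice that the case $a_1 > a_2$ surfaces exactly at this point: for $a_1 < a_2$ the integral in \eqref{key-conv} would diverge, signalling that the Derrida--Ruelle cascade is no longer the correct limiting object and that a genuinely different rescaling (collapsing the two levels into a single effective REM) is required.
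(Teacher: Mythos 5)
Your strategy --- factorizing the Laplace functional over the first level, identifying the inner functional $G_N$ with the second-level REM via Fact 1, and then treating the outer level as a REM computation against the effective test function $1-G_N$ --- is sound, and it is essentially a rigorous implementation of what the paper leaves unproven: the text merely asserts that the convergence of the processes \eqref{firstlev} and \eqref{seclev} ``easily'' yields that of the full process, and states the Fact without proof. Your structural remark about where $a_1>a_2$ enters (integrability of $1-G$ against the first-level intensity near $-\infty$) is correct and is exactly the right thing to notice.

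There is, however, one genuine gap: the domination used to pass to the limit in $2^{N/2}\bigl(1-\mathbb{E}[G_N(Y_1)]\bigr)$. The justification you offer --- Gaussian asymptotics for $p_{Y_1}$ together with $1-G_N\le 1$ --- fails precisely in the regime that makes the statement delicate: bounding $1-G_N$ by $1$, the prelimit integrand is of size $e^{-y\sqrt{\log 2/a_1}}$ as $y\to-\infty$, which is not integrable, so no dominating function can be produced this way. What is needed is the prelimit analogue of your exact formula for $1-G$: a bound, uniform in large $N$, of the form $1-G_N(y)\le C\min\bigl\{1,\,e^{y(\sqrt{\log 2/a_2}-\vare)}\bigr\}$ for $y\le 0$. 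This follows from $1-e^{-x}\le x$, a union bound over the $2^{N/2}$ second-level variables, and the sharp Gaussian tail estimate at the level $a_N^{(2)}+|y|-A$, where $[-A,A]$ contains the support of $\phi$. Combined with a global (not merely locally uniform) bound $2^{N/2}p_{Y_1}(y)\le C e^{-y(\sqrt{\log 2/a_1}-\vare)}$, valid for all $y$ once the quadratic term $e^{-y^2/(2a_1 N)}$ is retained rather than linearized, this gives an $N$-independent integrable envelope exactly because $\sqrt{\log 2/a_2}>\sqrt{\log 2/a_1}$, i.e.\ because $a_1>a_2$; the same envelope also shows that $2^{N/2}\bigl(1-\mathbb{E}[G_N(Y_1)]\bigr)$ stays bounded, which you implicitly need in order to send the $2^{N/2}$-th power to its exponential limit. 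With this estimate supplied your argument closes; as written, the key convergence is unsupported.
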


\begin{rem} To my knowledge, the point process in \eqref{derrida_ruelle} (and its generalization to an arbitrary number of levels) has made its first appearance in the landmark paper by 
Ruelle \cite{ruelle} on Derrida's REM and GREM.  Such point processes enjoy truly remarkable properties, and play a fundamental role in the Parisi theory of spin glasses. They are nowadays called Derrida-Ruelle 
cascades. 
\end{rem}

\subsubsection{The case $a_1 < a_2$,}
In this situation, it turns out that correlations are too weak to have an impact on the extremes, and 
the system "collapses" to a REM.  To see this, let 
\beq 
a_N \defi \be_c N - \frac{1}{2 \be_c} \log N.
\eeq
(This is the level of the maximum in the REM). Furthermore, let 
$\Xi \defi (\xi_i, i \in N)$ denote a Poisson point process with intensity measure 
$e^{-\be_c t} dt$ (up to irrelevant numerical constant) and let
\beq
\Xi_N \defi (X_{\s} - a_N, \s = (\s_1, \s_2) )
\eeq
be the extremal process.
\begin{fact}
$\Xi_N \to \Xi$, weakly.
\end{fact}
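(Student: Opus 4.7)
I would adapt the Laplace-functional argument from the REM case in Section~\ref{matching}. Fix $\phi : \R \to \R_+$ Borel with compact support, and set $L_N(\phi) \defi \mathbb{E}\exp\bigl(-\sum_\s \phi(X_\s - a_N)\bigr)$. The collections $\bigl(X^{(1)}_{\s_1}, (X^{(2)}_{\s_1, \s_2})_{\s_2}\bigr)$ are i.i.d.\ in $\s_1$, and for each $\s_1$ the second-level variables are i.i.d.\ in $\s_2$; so $L_N(\phi)$ factorises as
\beq
L_N(\phi) = \Bigl\{\mathbb{E}_Z\bigl[(1-h_N(Z))^{2^{N/2}}\bigr]\Bigr\}^{2^{N/2}}, \qquad h_N(z) \defi \mathbb{E}_Y\bigl[1- e^{-\phi(z+Y-a_N)}\bigr],
\eeq
with $Z \sim \mathcal{N}(0, a_1 N)$ and $Y \sim \mathcal{N}(0, a_2 N)$ independent. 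The algebraic feature exploiting $a_1 + a_2 = 1$ is that $Z+Y \sim \mathcal{N}(0, N)$; by Fubini, $\mathbb{E}_Z[h_N(Z)]$ reduces to the single-variable REM computation of~\eqref{find}, yielding $\mathbb{E}_Z[h_N(Z)] \sim \kappa\, 2^{-N} \int(1-e^{-\phi(t)}) e^{-\be_c t}\, dt$. If the linearisation $(1-h_N(Z))^{2^{N/2}} \approx 1-2^{N/2} h_N(Z)$ held under the outer expectation, raising to the $2^{N/2}$-th power would immediately deliver the Laplace transform of the target Poisson process.

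The main obstacle is that this linearisation is valid only where $2^{N/2} h_N(Z) \ll 1$, whereas $h_N(z)$ reaches order $N^{-1/2}$ once $z$ comes near $a_N \sim \be_c N$. The remedy is a truncation: work with $\Xi_N^{\text{tr}}$, the restriction of $\Xi_N$ to $\s$ with $X^{(1)}_{\s_1} \leq L N$, for a threshold $L$ satisfying
\beq \label{window}
\sqrt{a_1 \log 2} < L < \be_c - \sqrt{a_2 \log 2}.
\eeq
The lower bound makes the truncation lossless: the first-level REM analysis of Section~\ref{rem machinery} gives $N^{-1}\max_{\s_1} X^{(1)}_{\s_1} \to \sqrt{a_1 \log 2}$ almost surely, so $\Xi_N^{\text{tr}} = \Xi_N$ eventually and $L_N(\phi) - L_N^{\text{tr}}(\phi) \to 0$. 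The upper bound, via the Gaussian tail estimate $2^{N/2} h_N(z) \lesssim \exp\bigl\{N\bigl[\tfrac{1}{2}\log 2 - (\be_c - z/N)^2/(2 a_2)\bigr]\bigr\}$, forces $2^{N/2} h_N(z) \to 0$ uniformly on $\{z \leq L N\}$, so the linearisation is valid there. The window~\eqref{window} is non-empty precisely when $\sqrt{a_1}+\sqrt{a_2} < \sqrt{2}$, which by Cauchy--Schwarz together with $a_1+a_2=1$ holds iff $a_1 \neq a_2$: this is the algebraic fingerprint of the hypothesis $a_1 < a_2$, and it is exactly what reconciles the losslessness of the truncation with the smallness of $h_N$ on the truncated region. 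Once such an $L$ is fixed, the tail $\mathbb{E}_Z[h_N(Z);\, Z > L N]$ turns out to be $o(2^{-N})$ by a Laplace estimate (the rate $(\be_c - y)^2/(2 a_2) + y^2/(2 a_1)$ strictly exceeds $\log 2$ at $y = L > \sqrt{a_1 \log 2}$ whenever $a_1 < 1/2$), and routine bookkeeping converts the truncated linearisation into $\mathbb{E}_Z[(1-h_N(Z))^{2^{N/2}}] = 1 - \kappa\, 2^{-N/2}(1+o(1))\int(1-e^{-\phi(t)})e^{-\be_c t}\,dt$; raising to the $2^{N/2}$-th power closes the argument.
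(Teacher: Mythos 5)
Your proposal is correct, and it actually supplies a proof where the paper gives none: for this Fact the text explicitly declines to prove anything ("I will not give the (simple) proof\dots"), defers to \cite{bovier_kurkova_one, bolti_kis}, and only offers the heuristic that extremal configurations must differ in the first index and are therefore independent. Your route makes that heuristic quantitative by extending the REM Laplace-functional computation of Fact 1: the conditional factorisation $L_N(\phi)=\bigl\{\mathbb{E}_Z\bigl[(1-h_N(Z))^{2^{N/2}}\bigr]\bigr\}^{2^{N/2}}$ is exact, the Fubini reduction $\mathbb{E}_Z[h_N(Z)]\sim \kappa\,2^{-N}\int(1-e^{-\phi(t)})e^{-\beta_c t}\,dt$ uses only $Z+Y\sim\mathcal N(0,N)$, and the truncation at $X^{(1)}_{\sigma_1}\le LN$ with $\sqrt{a_1\log 2}<L<\beta_c-\sqrt{a_2\log 2}$ is exactly what is needed to make the linearisation legitimate (uniform exponential smallness of $2^{N/2}h_N$ on $\{Z\le LN\}$) while keeping the truncation lossless (first-level maximum at $\sqrt{a_1\log 2}\,N$). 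One emphasis should be shifted: the non-emptiness of the window, equivalent to $a_1\ne a_2$, is not where the hypothesis $a_1<a_2$ truly bites; it bites in your tail estimate, since the rate $(\beta_c-y)^2/(2a_2)+y^2/(2a_1)$ attains its minimum value $\log 2$ at $y^\ast=a_1\beta_c$, and $y^\ast<\sqrt{a_1\log 2}$ precisely when $a_1<1/2$, which is what makes $\mathbb{E}_Z[h_N(Z);\,Z>LN]=o(2^{-N})$ and hence preserves the full REM intensity (for $a_1>a_2$ the saddle lies above any admissible $L$, the truncated main term degenerates, and your scheme would correctly yield an empty limit at this centering). You note this only parenthetically, but it is the decisive use of $a_1<a_2$. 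Modulo the routine bookkeeping you defer (sandwiching $1-(1-h)^{m}$ between $mh(1-mh)$ and $mh$ on the truncated region, and the polynomial prefactors in the Gaussian estimates), the argument is complete, consistent with the paper's picture of decoupling at the first level, and realised by a different device than the sketched union-bound/two-point heuristic.
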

I will not give the (simple) proof of this statement, see e.g. \cite{bovier_kurkova_one, bolti_kis}. Still, the outcome is a bit surprising and deserves some comments. In fact, the GREM(2) is a correlated Gaussian field but in the case 
where $a_1 < a_2$, correlations are too weak to be detectable at the level of 
the extremal process. The reason for this is a certain entropy vs. energy competition, as can be seen 
through the following considerations: one can prove by a simple union bound and standard Gaussian estimates that for any given compact $A\subset \R$, and $a_N$ as in \eqref{max_rem}
\beq \bea
\lim_{N\to \infty} \PP\left[\exists \s, \tau : \s_1 = \tau_1, \s_2\neq \tau_2\; \text{and}\; X_\s -a_N \in A, X_\tau -a_N \in A \right] = 0. 
\eea \eeq
In other words, extremal configurations must  differ in the first index; by construction, this implies that the associated random variables are independent, and this justifies the onset of the Poisson point process. This also suggests that the way the correlated random field "overcomes correlations" is by looking for the maximum among all possible ("finite") subsets of configurations with different first indices,  a strategy which is fundamentally different from that of the GREM(2) with $a_1>a_2$.
The reader interested in the details is referred to \cite{bolti_kis}. 

\subsubsection{The case $a_1=a_2$} 
This is a somewhat "critical" case. Since the field still manages to overcome correlations reaching  
high values which are, to leading order, the same as in the REM-case, one is tempted to guess 
that similar mechanisms as in the case $a_1 < a_2$ are at play. However, this turns out to be incorrect: the strategy used by the random field to overcome correlations is more sophisticated.  
As a matter of fact, this critical case is in many aspects the most interesting, and technically most demanding among the possible scenarios. The reason for this is  the inherent {\it self-similarity} (the model is the superposition of two identical random energy models). Anticipating, one may say that the  self-similarity makes the model 
(and its generalization: the hierarchical field which is discussed in the next section) ubiquitous in fields beyond the context of spin glasses.

Here is first indication that things might not be as easy as they seem. Recall the normalization $a_1 + a_2 = 1$; at criticality, it thus follows that $a_1 = a_2 = 1/2$.
According to the LDP \eqref{caseone}, the optimal $\la's$ are given by 
\beq 
\la_1^{\text{max}} = \sqrt{2\log 2} a_1 = \frac{\be_c}{2} = \la_2^{\text{max}}
\eeq
On the other hand, we would get {\it exactly} the same values through the 
LDP leading to \eqref{casetwo}: both scenarios yield the same leading order for the level of the maximum, $\be_c$. The subleading corrections are however a different matter: in the first case (superposition of two REMs) we would get 
\beq 
\be_c N - \frac{1}{\be_c} \log N,
\eeq
whereas in the second case (collapsing to a REM)
\beq \label{riddle}
\be_c N - \frac{1}{2 \be_c} \log N.
\eeq
Remark that the logarithmic corrections differ by a factor of two. So, which is the correct level of the maximum? The answer to the riddle is \eqref{riddle}. The  point is however not so much the numerical value (it just {\it happens} to be the same as in the REM) but the physical mechanism which leads to this outcome. The starting point is again the LDP-analysis for the counting random variable $\mathcal N_N$. Specifying the outcome of these considerations in the critical case $a_1  = a_2 = 1/2$, extremal configurations satisfy 
\beq 
X_\s =\be_c N+ \omega_N, \quad \omega_N = o(N).
\eeq
The question is again how to choose $\omega_N$, and for this we would like to use the 
method of matching as discussed in Section \ref{matching}. However, we first make a fundamental observation: the LDP analysis of the counting random variable $\mathcal N_N$ tells us that no configuration $\s = (\s_1, \s_2)$ will be found (in the large $N$-limit) such that 
\beq 
X_{\s_1}^{(1)} >\frac{\be_c}{2} N ,
\eeq
see in particualr \eqref{firstlevel}. In other words, configurations contributing to the extremal process should also satisfy the condition 
\beq \label{condless}
X_{\s_1}^{(1)} \leq \frac{\be_c}{2} N \,.
\eeq
Summarizing, with $a_N \defi \be_c N+ \omega_N$, a "candidate" for the extremal process is the {\it thinned} point process
\beq \label{thinned}
\Xi_N^{(\leq)} \defi \sum_{\s}^{(\leq)} \de_{X_{\s} - a_N}
\eeq
where summation $\sum^{(\leq)}$ is only over those configurations which satisfy \eqref{condless}. Following the method of matching we thus require that for arbitrary compact $A\subset \R$
\beq \label{order_one_thinned}
\mathbb{E}\left[ \Xi_N^{(\leq)}(A) \right] \text{remains of order one in the limit}\; N\to \infty.
\eeq
But with $\s = (\s_1, \s_2)$ any reference configuration,
\beq \bea
\mathbb{E}\left[ \Xi_N^{(\leq)}(A) \right] & = 2^N \PP\left[X_{\s_1}^{(1)} \leq \frac{\be_c}{2} N , X_{\s} -a_N \in A \right] \\
& = 2^N \int_{A} \PP\left[X_{\s_1}^{(1)} \leq\frac{\be_c}{2} N  \Big| X_{\s} = x+ a_N \right] 
\exp\left( -\frac{(x+ a_N)^2}{2 N}\right)  \frac{dx}{\sqrt{2\pi N}}
\eea \eeq
We now expand the Gaussian density, 
\beq \bea
& \exp\left( -\frac{(x+ a_N)^2}{2 N}\right)  \frac{1}{\sqrt{2\pi N}}  \sim 2^{-N}  \exp\left(-\be_c x\right) \frac{
\exp{ \left[- \be_c\omega_N\right]}}{\sqrt{N}}.
\eea \eeq
Compared to the REM, one notices the additional term
\beq \label{brownianbridge}
\PP\left[X_{\s_1}^{(1)} \leq \frac{\be_c}{2} N  \Big| X_{\s} = x+ a_N \right].
\eeq
In the simple case of critical GREM(2), this term gives rise to yet another numerical constant. However, contrary to the many irrelevant numerical constants encountered along the way, this number encodes important information. In fact, in many applications (some will be discussed below) this term plays an absolutely fundamental, {\it structural} role, so it is important to understand what lies behind it. Remark 
that $X_\s = X_{\s_1}^{(1)}+ X_{\s_1, \s_2}^{(2)}$, where $X^{(1)}$ and $X^{(2)}$ are independent 
Gaussians of variance $N/2$. Therefore,  for given $\s = (\s_1, \s_2)$ we may see the process $\left\{0, X_{\s_1}^{(1)}, X_{\s} \right\}$ as the first steps of a random walk (issued at zero) with Gaussian increments, and \eqref{brownianbridge} is then the probability that a certain (discrete) Brownian bridge of lifespan $2$ stays below a certain threshold at time $1$. Here is a nice trick to compute this probability: the idea is to consider the random variable $X_{\s_1}^{(1)} - \frac{1}{2} X_\s$, which is a Gaussian with the property that
\beq 
\mathbb{E}\left[ X_{\s} \left( X_{\s_1}^{(1)} - \frac{1}{2} X_\s \right)\right] = 
\mathbb{E} X_{\s} X_{\s_1}^{(1)} - \frac{1}{2} \mathbb{E}  X_{\s} X_\s = 0.
\eeq
This implies that $X_\s$ and $X_{\s_1}^{(1)} - \frac{1}{2} X_\s$ are independent. Using this,
\beq \bea
& \PP\left[X_{\s_1}^{(1)} \leq \frac{\be_c}{2} N  \Big| X_{\s} = x+ a_N \right] \\
& \qquad = \PP\left[X_{\s_1}^{(1)} -\frac{1}{2} X_\s \leq \frac{\be_c}{2} N  - \frac{1}{2}X_\s \Big| X_{\s} = x+ a_N \right] \\
& \qquad = \PP\left[X_{\s_1}^{(1)} -\frac{1}{2} X_\s \leq \frac{\be_c}{2} N  - \frac{1}{2}(x+a_N) \Big| X_{\s} = x+ a_N \right]  \\
& \qquad = \PP\left[X_{\s_1}^{(1)} -\frac{1}{2} X_\s \leq\frac{\be_c}{2} N  - \frac{1}{2}(x+a_N) \right],
\eea \eeq
the last equality by independence. Using that $a_N = \be_c N + \omega_N$, we get that \eqref{brownianbridge} equals
\beq \label{lying low}
\PP\left[X_{\s_1}^{(1)} -\frac{1}{2} X_\s \leq - \frac{1}{2}(x+\omega_N) \right]
\eeq
Since we assume $\omega_N = O(\log N)$, and $X_{\s_1}^{(1)} -\frac{1}{2} X_\s$ is  a centered Gaussian with variance of order $N$, one checks that 
\beq \label{onehalf}
\eqref{brownianbridge}  \to \frac{1}{2} \qquad (N\to \infty). 
\eeq
Going back to \eqref{order_one_thinned}, we see that we should choose 
\beq
\omega_N = -\frac{1}{2 \be_c} \log N, 
\eeq
{\it just as in the case of the REM}. However, there are a number of twists. 
First, due to \eqref{onehalf}, the intensity measure of the limiting extremal process (provided the similarities go through) will be {\it half} that of the extremal process of the REM.  Second, the above discussion doesn't quite explain what happens at the physical level. A good way to understand what is at stake goes as follows. One easily checks that a Gaussian random variable of variance of order $O(N)$ which is required to stay below a straight line of "height" $o(\sqrt{N})$, such is the case in \eqref{lying low}, lies way lower, namely at heights of order $-\sqrt{N}$ (this is an instance of the entropic repulsion, a phenomenon which is well-known in the statistical mechanics of random surfaces). This turns out to be the strategy used by the field to overcome correlations: one can prove that for $\s$ an extremal configuration, i.e. such that 
\beq 
X_{\s}  \approx \be_c N - \frac{1}{2 \be_c} \log N,
\eeq
it holds that 
\beq \label{cons_start}
X_{\s_1}^{(1)} \approx \frac{\be_c}{2} N-\vare \sqrt{N},
\eeq
for some $\vare >0$, with overwhelming probability. In other words: extremal configurations are {\it never} extremal on the first level of the tree. (The same is true in the case $a_1< a_2$, but due to a less sophisticated mechanism). The reason why this is a good strategy has to do with the self-similarity of the critical GREM, reflected in the fact that $a_1 = a_2$. One can easily check that
\beq \label{many}
\sharp \left\{\s_1 = 1\dots 2^{N/2}:  X_{\s_1}^{(1)} \approx \frac{\be_c}{2} N -\vare \sqrt{N} \right\} \approx \exp (\vare \sqrt{N} \be_c),
\eeq
with overwhelming probability.  For a given $\s_1$ to be extremal it means that the associated $X_{\s_1, \s_2}^{(2)}$ must make up for the sub-optimality of the first level, i.e. it has to make an unusually large jump of order
\beq 
a_N - \left(  \frac{\be_c}{2} N -\vare \sqrt{N}  \right)  =\frac{\be_c}{2} N +\vare \sqrt{N}.
\eeq
But for given $\s_1$, straightforward estimates show that 
\beq \label{jump}
\PP\left[ \exists \s_2= 1 \dots 2^{N/2}: X_{\s_1, \s_2}^{(2)} \approx \frac{\be_c}{2} N +\vare \sqrt{N} \right] \approx \exp( -\vare\be_c \sqrt{N })
\eeq
Combining this with \eqref{many} we thus see that 
\beq
\PP\left[ \exists \s = (\s_1, \s_2):  X_{\s_1}^{(1)} \approx \frac{\be_c}{2} N -\vare \sqrt{N}, X_{\s} \approx a_N \right] \sim 1 \quad (N\to \infty).
\eeq
(Of course, the situation is slightly more complicated, since one also has to "integrate over $\vare$", but the above back-of-the-envelope computations give a good idea of what is going on). On the other hand one also checks that to given $\s_1$ the probability to find {\it two} configurations $\s_2, \tau_2$ within the {\it same} tree which make the unusually large jump is at most
\beq \bea \label{smallsmall}
\PP\left[ \exists \s_2 \neq \tau_2 : X_{\s_1, \s_2}^{(2)} \approx  \frac{\be_c}{2} N +\vare \sqrt{N}, X_{\s_1, \tau_2}^{(2)} \approx  \frac{\be_c}{2} N +\vare \sqrt{N} \right] \approx \exp( - 2\vare \be_c \sqrt{N } ),
\eea \eeq
i.e. twice as small as \eqref{jump}, and this implies 
\beq \bea \label{cons_finish}
& \PP\left[ \exists \s, \tau, \s_1= \tau_1:  X_{\s_1}^{(1)}  \approx \frac{\be_c}{2} N-\vare \sqrt{N}, X_{\s} \approx a_N , X_{\tau} \approx a_N\right] \approx \exp( -\vare \be_c\sqrt{N }) .
\eea \eeq
(This can be rigorously established by a simple union bound, and the usual Gaussan estimates.) This can be used to prove that the level of the maximum of the critical GREM$(2)$ is 
indeed $a_N = \be_c N - 1/(2\be_c) \log N$, just as in the REM case. For this, it is useful to "give 
oneself some room": for $\delta >0$, 
\beq
a_N^{(\de)} = \be_c N -\frac{1-\delta}{ 2\be_c} \log N. 
\eeq
Remark that 
\beq
a_N^{(-\de)} \leq a_N \leq a_N^{(\de)}.
\eeq
One direction, the upper bound, is easy: a simple Markov inequality (union bound) shows that for any
\beq
\lim_{N\to \infty} \PP\left[ \max_\s X_\s \geq a_N^{(\de)} \right] = 0. 
\eeq
As for a lower bound, it can be established by a {\it restricted Payley-Zygmund inequality}. We claim that for any $\delta >0$, it holds 
\beq \label{claim_lower}
\PP\left[ \max_\s X_\s \geq a_N^{(-\delta)}\right] \to 1, \qquad (N\to \infty).
\eeq
To see this, consider $\varepsilon >0$ and denote by $\max_{\s}^{\vare} X_\s$ the maximum of the field {\it restricted} to those configurations $\s$ with 
first level satisfying 
\beq
X_{\sigma_1}^{(1)} \leq \frac{\be_c}{2} N - \varepsilon \sqrt{N}
\eeq
Since the maximum over a set is larger than the maximum over a subset, we have: 
\beq \bea 
\PP\left[ \max_\s X_\s \geq a_N^{(-\delta)} \right] &\geq \PP\left[ \max^\vare_\s X_\s \geq a_N^{(-\delta)} \right] \\
& \geq \frac{\mathbb{E}\left[\sharp\{\s: X_\s \geq a_N^{(-\delta)}, X_{\sigma_1}^{(1)} \leq \frac{\be_c}{ 2} N - \varepsilon \sqrt{N}\}\right]^2}{\mathbb{E}\left[\sharp\{\s: X_\s \geq a_N^{(-\delta)}, X_{\sigma_1}^{(1)} \leq \frac{\be_c}{2} N - \varepsilon \sqrt{N}\}^2\right]}, \label{restr_pz}
\eea \eeq
the last line by Payley-Zygmund. Now,
\beq \bea \label{riscrivi}
& \mathbb{E}\left[\sharp\{\s: X_\s \geq a_N^{(-\delta)}, X_{\sigma_1}^{(1)} \leq \frac{\be_c}{2} N - \varepsilon \sqrt{N}\}^2\right] \\
& = \sum_{\s, \tau} \PP\left[ X_{\sigma_1}^{(1)}, X_{\tau_1}^{(1)} \leq \frac{\be_c}{2} N - \varepsilon \sqrt{N}, X_\s, X_\tau \geq a_N^{(-\delta)}
 \right] \\
& = \sum_{\s_1 \neq \tau_1} + \sum_{\s_1 = \tau_1} 
\eea \eeq
Now, we have the following simple facts: 
\begin{itemize}
\item if $\s_1 \neq \tau_1$ we have by construction that $X_\s$ and $X_\tau$ are independent. In other words, the first sum in the last line of \eqref{riscrivi} is nothing but (less than) the numerator in 
\eqref{restr_pz}. 
\item By considerations similar to those in \eqref{cons_finish}, 
the second sum in \eqref{riscrivi} is of order $\exp(-\varepsilon \be_c \sqrt{N})$.
\end{itemize}
Using these observations in \eqref{restr_pz} we thus have
\beq
\PP\left[ \max_\s X_\s \geq a_N^{(-\delta)} \right] \geq 
\frac{\mathbb{E}\left[\sharp\{\s: X_\s \geq a_N^{(-\delta)}, X_{\sigma_1}^{(1)} \leq \frac{\be_c}{2}N - \varepsilon \sqrt{N}\}\right]^2}{\mathbb{E}\left[\sharp\{\s: X_\s \geq a_N^{(-\delta)}, X_{\sigma_1}^{(1)} \leq \frac{\be_c}{2}N - \varepsilon \sqrt{N}\}\right]^2 + \exp(-\varepsilon \be_c \sqrt{N}) }
\eeq
A simple computation shows that 
\beq
\mathbb{E}\left[\sharp\{\s: X_\s \geq a_N^{(-\delta)}, X_{\sigma_1}^{(1)} \leq \frac{\be_c}{2}N - \varepsilon \sqrt{N}\}\right] \sim  N^{\de/2}, 
\eeq
hence 
\beq 
\PP\left[ \max_\s X_\s \geq a_N^{(-\delta)} \right] \geq \frac{N^{\de}}{N^{\de}+ \exp(-\varepsilon \be_c \sqrt{N}) } \to 1 \qquad (N\to \infty),
\eeq
proving the claim \eqref{claim_lower}. \\

The above analysis can be used to establish the convergence of the full extremal process: since extremal configurations must necessarily differ on the first level (the associated random variables are independent), one naturally expects the limiting process to be Poissonian. This is indeed correct. With 
$a_N = \be_c N - 1/(2\be_c) \log N$, and denoting by $\Xi_N \defi \sum_{\s} \de_{X_\s-a_N}$
the extremal process of the critical GREM(2), it holds: 
\begin{fact} \cite{bovier_kurkova_one, bolti_kis} 
$\Xi_N $ converges weakly to a Poisson point process with intensity measure "half the one of 
the REM".
\end{fact}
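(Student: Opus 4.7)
The plan is to establish weak convergence via Laplace functionals, adapting the REM computation \eqref{laplace_two} to the two-level structure. By the LDP at criticality \eqref{firstlevel}, no $\sigma_1$ satisfies $X^{(1)}_{\sigma_1} > \frac{\be_c}{2} N$ with overwhelming probability, so $\Xi_N$ shares the same weak limit as the thinned process $\Xi_N^{(\leq)}$ of \eqref{thinned}; convergence of the latter is what I shall establish. Fix $\phi : \R \to \R_+$ continuous with compact support. Since the branch data $(X^{(1)}_{\sigma_1}, X^{(2)}_{\sigma_1,\cdot})$ is i.i.d. across $\sigma_1$, the Laplace functional factorizes as
\beq
\mathbb{E}\bigl[e^{-\langle \phi, \Xi_N^{(\leq)}\rangle}\bigr] = (1 - p_N)^{2^{N/2}}, \qquad p_N \defi \mathbb{E}\bigl[1_{\{Y \leq \be_c N/2\}}(1 - f_N(Y))\bigr],
\eeq
with $Y \sim \mathcal{N}(0, N/2)$ and $f_N(y) \defi \mathbb{E}\bigl[\exp(-\sum_{\sigma_2}\phi(y + X^{(2)}_{\sigma_2} - a_N))\bigr]$. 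Convergence to the stated PPP thus reduces to showing $2^{N/2} p_N \to \frac{1}{2}\int(1-e^{-\phi(x)})\,\kappa e^{-\be_c x}\,dx$, where $\kappa$ is the REM intensity constant.

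The central computation is a second application of the trick $\mathbb{E}[e^{-\phi}] = 1 - \mathbb{E}[1-e^{-\phi}]$ on the second level: after taking the $2^{N/2}$-th power and linearizing, one finds
\beq
1 - f_N(y) \sim 2^{N/2}\int(1 - e^{-\phi(x)})\,\frac{\exp(-(x+a_N-y)^2/N)}{\sqrt{\pi N}}\,dx
\eeq
in the regime of interest. Inserting into $p_N$ and swapping integration order produces the joint Gaussian density of $(Y, X_\sigma)$ at $(y, x + a_N)$ integrated over $y \leq \be_c N/2$. Completing the square in $y$ separates the marginal factor $\exp(-(x+a_N)^2/(2N)) \sim 2^{-N}\sqrt{N}\, e^{-\be_c x}$ (the familiar REM match) from a residual one-dimensional Gaussian integral, which is precisely the Brownian-bridge quantity \eqref{brownianbridge} and, by the orthogonality trick of the text, tends to $\frac{1}{2}$; see \eqref{onehalf}. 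Combining these yields $2^{N/2} p_N \to \frac{1}{2}\int(1-e^{-\phi(x)})\,\kappa e^{-\be_c x}\,dx$, i.e.\ the half-intensity Poisson limit.

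The main obstacle is identifying the dominant $y$-regime in $p_N$ and justifying the linearization uniformly there. A careful scale analysis reveals that the bulk of $p_N$ does not come from $y$ near the first-level maximum $\frac{\be_c}{2} N - \log N/(2\be_c)$, as a naive REM analogy might suggest, but rather from the entropic-repulsion window $y = \frac{\be_c}{2}N - \vare\sqrt{N}$ anticipated in \eqref{cons_start}: on this window the effective second-level intensity decays exponentially, so the linearization is valid, and the surviving Gaussian factors conspire to produce the $\frac{1}{2}$ from \eqref{onehalf}. Negligibility of the complementary windows — $y$ within $O(\log N)$ of $\frac{\be_c}{2}N$ (where the linearization can fail but the $Y$-density is suppressed as $2^{-N/2}$) and $y$ in the bulk $y = o(\sqrt{N})$ (where the second-level contribution is super-exponentially small) — is essentially encoded in the consistency estimates \eqref{jump}--\eqref{smallsmall} used for the Paley-Zygmund lower bound on the maximum; their adaptation from max-level to extremal-process statements is standard, after which weak convergence of $\Xi_N$ to the stated PPP follows by the Laplace-functional characterization.
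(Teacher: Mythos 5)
The paper does not actually prove this Fact: it is imported from \cite{bovier_kurkova_one, bolti_kis}, and the text supplies only the matching heuristic for the thinned process \eqref{thinned} together with the remark that the factor $1/2$ originates in \eqref{onehalf}. Your proposal correctly upgrades that heuristic to a proof, and it is essentially the route of the cited references: Laplace functionals, factorization over the i.i.d.\ first-level branches, linearization on each level, and the Gaussian conditioning that isolates \eqref{brownianbridge}. Three confirmations of the points you flag as delicate. First, the reduction from $\Xi_N$ to $\Xi_N^{(\leq)}$ is indeed cheap for the GREM(2): a union bound gives $2^{N/2}\,\PP[X^{(1)}_1>\be_c N/2]=O(N^{-1/2})$, so the two processes coincide with probability tending to one (the paper's warning that this step ``takes some work'' is aimed at fields with many levels, where a barrier must be enforced at every intermediate scale). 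Second, your window analysis is right: writing $y=\be_c N/2-v$ and using the sandwich $1-2^{N/2}q_N(y)\le (1-q_N(y))^{2^{N/2}}\le e^{-2^{N/2}q_N(y)}$ for the single-branch quantity $q_N(y)$, the contribution to $2^{N/2}p_N$ per unit $v$ is asymptotically $e^{-v^2/N}/\sqrt{\pi N}$; hence the mass sits at $v\asymp\sqrt N$ (the entropic-repulsion window of \eqref{cons_start}), the top window $v=O(\log N)$ where linearization degrades contributes only $O(\log N/\sqrt N)$, and the full $v$-integral is precisely the limit $1/2$ of \eqref{brownianbridge} computed in \eqref{onehalf}. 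Third, the unconditioned Gaussian factor reproduces the REM matching with the same $a_N$, which is why the limiting intensity is exactly half the REM one. The only cosmetic inaccuracy is attributing the negligibility of the bad windows to \eqref{jump}--\eqref{smallsmall}: those second-moment estimates are what make the Paley--Zygmund lower bound work, whereas the window control in the Laplace functional is a first-moment (single-branch) Gaussian computation; but this does not affect the validity of the argument.
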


The reader notices the following twists: 
\begin{itemize}
\item first, the above statement concerns the full extremal process, and {\it not} the thinned version \eqref{thinned} which was instrumental for our considerations. Indeed, it takes some work to {\it prove} that configurations not satisfying \eqref{condless} do not contribute to the extremal process. I am not going to dwell on this at this point, since I will give some pointers towards 
a general recipe while discussing the subleading order of the hierarchical field, see Section \ref{smart markov} below. 
\item Second, the reduced intensity of the limiting process ("half the one of the REM") is of course due to \eqref{onehalf}.
\end{itemize}

\section{The critical GREM with $K>2$ levels}
The picture underlying the critical GREM(2) is stable, with minor adjustments needed to cover the GREM$(K)$ for generic $K\in \N$. 

\begin{Def} Consider $K$-dimensional vectors $\s = (\s_1, \s_2, \dots, \s_K)$; without loss of generality we assume 
$\s_j = 1\dots 2^{N/K}$ for $j=1\dots K$. The random field is then given by 
\beq 
X_\s = X_{\s_1}^{(1)} + X_{\s_1, \s_2}^{(2)}+ \dots + X_{\s_1 \dots \s_K}^{(K)}
\eeq
where all the random variables on the r.h.s. above are independent with $X^{(j)}_\cdot \sim 
\mathcal N(0, N/K)$. 
\end{Def}

Again, we are interested in the maximum, 
\beq 
M_N \defi \max_{\s = (\s_1, \dots, \s_k)} X_\s . 
\eeq
As a first step, we focus on the leading order, namely $m_k\in \R$ s.t. $M_N = m_k N + o(N)$
with overwhelming probability. The analysis goes along the same lines as in GREM(2), i.e. one considers 
\beq \label{klevels}
\mathcal N_N(\la_1, \dots, , \la_k) \defi \sharp \left\{ \s = (\s_1, \dots, \s_K): 
X_{\s_1}^{(1)} \geq \la_1 N, \dots, X_{\s_1 \dots \s_k}^{(K)} \geq \la_K N
\right\}.
\eeq
Second moment method then leads to the following variational principle for the leading order of the maximum:
\beq \label{finitely_cons}
m_K \defi \sup\left\{ \la_1 + \dots + \la_k: \sum_{j=1}^L \frac{\la_j^2}{2 a_j} \leq \frac{L}{K} \log 2, \text{for}\; L =1 \dots K \right\}
\eeq
The maximizers are easily seen to be given by
$ \la_j^{\text{max}} = \be_c/K$, hence
$m_K = \be_c,$
{\it exactly} as in the REM. The same is true for the level of the maximum (lower orders included) which turns out to be
\beq 
a_N = \be_c N - \frac{1}{2\be_c} \log N.
\eeq
The only quantitative difference is that the extremal process converges towards a Poisson point process with density the one of the extremal process associated to the REM {\it reduced} by a factor $C_K$, where $C_K$ is the probability that a Brownian bridge of lifespan $K$ stays below zero at the times $1, 2, \dots, K-1$. In other words, with $\mathfrak Z_K$ such a Brownian bridge: 
\beq \label{bbridge}
C_K = \PP\left[ \mathfrak Z_k\left(1\right) \leq 0, \mathfrak Z_k\left(2\right)\leq 0, \dots, \mathfrak Z_K\left(K-1\right)\leq 0\right].
\eeq
Qualitatively, the picture underlying the extremes is exactly as in the critical GREM(2). It turns out that 
configurations contributing to the extremal process 
are {\it not} extremal at intermediate levels, but lie lower (by order $\sqrt{N}$) than the relative 
maximum: sligthly more precisely, a {\it necessary} requirement for $X_{\s} \approx \be_c N$ to hold is that 
\beq \label{entropic conditions}
\sum_{j=1}^L X_{\s_1, \dots, \s_l}^{(l)} \approx \frac{L}{K} \be_c N - O(\sqrt{N}),
\eeq
for {\it all} $L=1 \dots K-1$. This is the entropic repulsion phenomenon which plays a fundamental role 
in the behavior of the system. It should be clear by now what the physical principles behind \eqref{entropic conditions} are. If the underlying strategy were to be maximizing on all levels (the so-called "greedy algorithm") the maximum would be lower (by a logarithmic factor) than the REM-maximum. 
In the critical case, however, the system has a better strategy: one looks for configurations 
$(\s_1, \dots, \s_{K-1})$ which are lower than the maximal possible {\it intermediate} values. This is energetically not optimal, but it allows us to meet an optimal balance between energy and entropy: there is in fact a large number of configurations satisfying \eqref{entropic conditions}, in fact: subexponentially many, and attached to these 
one still finds $\s_K's$  managing to make up for the energy loss.

\chapter{A multiscale refinement of the second moment method}
In this section we study the extremes of the Gaussian hierarchical field, which is the following model.

Let $N\in \N$. We consider binary strings $\alpha = \alpha_1 \alpha_2 \dots \alpha_N$,  $\alpha_i \in \{0,1\}$. $l(\alpha) = N$ is the length of the string. We write $T$ for the set of all such strings, and $T_N \subset T$ for the set of strings of length $N$.  If $\alpha \in T_N, 0\leq k \leq N$, we write $[\alpha]_{k}$ for the substring
$
[\alpha_1 \alpha_2 \dots \alpha_N]_{k} = \alpha_{1}\dots \alpha_{k}.
$
The Gaussian hierarchical field $(X_\alpha)_{\alpha \in T}$ is a family of Gaussian random variables where, 
for $\alpha \in T$ and $l(\alpha) = N \geq 1$,
\beq \label{hierarchical_field}
X_{\alpha} = \xi_{\alpha_1}^1+\xi_{\alpha_1, \alpha_2}^2+\dots+\xi_{\alpha_1 \alpha_2 \dots \alpha_N}^N\,,
\eeq
Here $(\xi_{\alpha_1 \alpha_2 \dots \alpha_i}^i; i\in \N, \alpha_1 \dots \alpha_i \in T_i)$ is a family of 
independent, centered normally distributed random variables with unit variance.  In other words, the Gaussian hierarchical field is a critical GREM$(K)$ with $K = K(N) = N$, i.e. with a {\it growing} number of levels. 

So far, the first step in the analysis of the extremes has been the computation of the leading order of the maximum. The approach used to solve REM and GREM is however hardly conceivable here. In the hierarchical field the number of levels in the underlying tree grows indefinitely, so when implementing the method from the previous section we would ideally end up with an optimization problem such as \eqref{finitely_cons} with {\it infinitely} many constraints: this is definitely not a promising route. To circumvent this obstacle, we will tackle the issue by means of a multiscale refinement of the second moment method.  The main ingredient behind the refinement is a coarse graining scheme which implements the idea that hierarchical fields can be used as an approximation tool. (This idea pervades the whole Parisi theory for mean field spin glasses, 
but no knowledge of this is assumed here).  At an abstract level one may say that 
we approximate the hierarchical field with yet another hierarchical field with less structure, a critical GREM with K-levels, but in concrete terms we will simply approximate the maximum of the 
hierarchical field (a number)  with the maximum of a critical GREM(K). It will become clear that it is 
not essential that the target model, namely the one we wish to approximate through a GREM, is exactly hierarchically organized\footnote{it goes without saying, if this is the case technicalities are naturally reduced by an order of magnitude.}; all is needed is a certain phenomen of {\it decoupling at mesoscopic scales}.

\section{The leading order of the maximum of the hierarchical field}

Again, we shorten $\be_c = \sqrt{2\log 2}$. By Markov inequality (a union bound) and standard Gaussian estimates one gets that to $\epsilon > 0$ there exists $\de = \de(\epsilon)>0$ such that
\beq \label{upper}
\PP\left[ \max_{\alpha\in T_N} X_{\alpha} \geq \be_c(1+\epsilon) N \right] \leq e^{- \de(\epsilon) N}
\eeq
for large  enough $N$. Just as in the case of a critical GREM, this simple bound turns out to be tight
(despite correlations):  
\begin{prop} \label{lower_prop} Given $\eta >0$ there exists $b = b(\eta)>0$ such that
\beq \label{lower_bound}
\PP\left[ \max_{\alpha\in T_N} X_{\alpha} \leq \be_c(1-\eta) N \right] \leq e^{-b N}.
\eeq
\end{prop}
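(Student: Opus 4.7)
The plan is a multiscale refinement of the second-moment method, followed by an amplification step exploiting the independence of subtrees. Set $b \defi (1-\eta)\beta_c$ and $\gamma \defi 1-(1-\eta)^2 > 0$; positivity of $\gamma$ reflects that we are aiming strictly below the leading order $\beta_c N$, and it is the source of the exponential factors in the proof.

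First, I would establish a constant (in $N$) lower bound by means of a restricted second moment. Consider the truncated counting variable
\begin{equation*}
\mathcal{N}_N \defi \#\left\{\alpha \in T_N :\ X_\alpha \geq bN,\ X_{[\alpha]_j} \leq bj + C\sqrt{j}\ \text{for all}\ 1 \leq j \leq N-1\right\},
\end{equation*}
with $C > 0$ a sufficiently large constant. Standard Gaussian estimates together with a ballot-type bound for the entropic-repulsion constraint yield $\mathbb{E}[\mathcal{N}_N] \gtrsim N^{-1/2} \cdot 2^{\gamma N}$. For the second moment, I would decompose pairs $(\alpha, \beta)$ according to the length $k$ of their common prefix. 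Without the repulsion cap, the Gaussian integral over the shared partial sum $X_{[\alpha]_k}$ would be dominated by the ``greedy'' value $2bkN/(N+k) > bk$, yielding off-diagonal contributions that blow up at the critical overlap $k^\ast \sim (2(1-\eta)^2-1)N$ (which is strictly positive whenever $\eta < 1-1/\sqrt{2}$). The cap $X_{[\alpha]_j} \leq bj + C\sqrt{j}$ forces the shared partial sum below $bk + C\sqrt{k}$, well inside the large-deviation region, so that a direct Gaussian conditioning computation bounds the off-diagonal contributions uniformly in $k$, giving $\mathbb{E}[\mathcal{N}_N^2] \leq C'(\eta) \cdot \mathbb{E}[\mathcal{N}_N]^2$ for a constant $C'(\eta)$. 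Paley-Zygmund then delivers
\begin{equation*}
\mathbb{P}[\max_\alpha X_\alpha \geq bN] \geq 1/C'(\eta) =: \epsilon_0(\eta) > 0
\end{equation*}
for all $N \geq N_0(\eta)$.

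Next I would amplify this constant to the target exponential bound using independence of subtrees. Let $m \defi \lceil \log_2 N \rceil$; the $2^m = N$ subtrees rooted at level $m$ are independent copies of a hierarchical field on $T_{N-m}$. Applying the previous step with parameter $\eta/2$ to each subtree, the events $A_\beta \defi \{\max_{\text{subtree at }\beta} X \geq (1-\eta/2)\beta_c(N-m)\}$ satisfy $\mathbb{P}[A_\beta] \geq \epsilon_0(\eta/2)$ and are independent across $\beta \in T_m$. A union bound over Gaussian tails at deviation $\beta_c \eta N/4 \gg \sqrt{m}$ shows that $\min_{\beta \in T_m} X_\beta \geq -\beta_c\eta N/4$ with probability $1 - o(e^{-N})$; on this event, the occurrence of $A_\beta$ for any $\beta \in T_m$ yields a leaf $\alpha$ with $X_\alpha \geq bN$, as a short calculation using $m = O(\log N)$ confirms. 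Therefore,
\begin{equation*}
\mathbb{P}[\max_\alpha X_\alpha < bN] \leq (1 - \epsilon_0(\eta/2))^{N} + o(e^{-N}) \leq e^{-b(\eta)N}
\end{equation*}
for $b(\eta) \defi \tfrac{1}{2}\log\bigl(1/(1-\epsilon_0(\eta/2))\bigr)$ and $N$ large enough.

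The main obstacle is the design of the entropic-repulsion cap in the first step. The naive counting variable $\#\{\alpha : X_\alpha \geq bN\}$ has second moment exponentially larger than its squared mean as soon as $2(1-\eta)^2 > 1$, due to the critical overlap pairs discussed above. The $C\sqrt{j}$ slack in the cap is calibrated precisely to kill these pairs without hurting the first moment, and is the minimal multiscale input required here; a finer scale-by-scale analysis would be needed to pin down the subleading logarithmic correction to the maximum, but is unnecessary for the leading-order estimate at hand.
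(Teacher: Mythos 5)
Your argument is correct, but it takes a genuinely different route from the one in the notes. The proof given here coarse-grains the field into $K=K(\eta)$ blocks of length $N/K$, counts configurations whose block increments on levels $2,\dots,K$ each exceed $(1-\vare)\be_c N/K$, and controls the second moment by (i) exact decoupling below the branching block, (ii) a worst-case ``REM approximation'' inside the branching block, and (iii) the omission of the first block, which makes the dominant off-diagonal term exactly $\mathbb{E}[\mathcal N]^2$ and renders the variance \emph{exponentially} smaller than the squared mean; Paley--Zygmund then gives $1-e^{-\de N}$ in one stroke, and the first block is handled by a union bound on its minimum. You instead impose a barrier $X_{[\alpha]_j}\le bj+C\sqrt j$ at \emph{every} level --- the Bramson/ABK-style truncated second moment --- which correctly kills the intermediate-overlap pairs responsible for the failure of the naive method when $2(1-\eta)^2>1$, but, because overlaps $k=O(1)$ still contribute a constant multiple of $\mathbb{E}[\mathcal N]^2$, yields only $\mathbb{E}[\mathcal N^2]\le C'(\eta)\,\mathbb{E}[\mathcal N]^2$ and hence a constant lower bound on the probability; your amplification over the $2^{\lceil\log_2 N\rceil}$ independent subtrees at depth $O(\log N)$ then correctly upgrades this to $e^{-bN}$, and plays the same structural role as the paper's trick of discarding the first block. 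The trade-off: your route needs ballot-type estimates for the barrier event that are uniform in the overlap $k$ (the one step you assert rather than execute --- it is true, and standard, but it is where the real work sits; note also that the barrier costs an extra factor $1/N$ in the first moment, so $\mathbb{E}[\mathcal N]\asymp N^{-3/2}2^{\gamma N}$ rather than $N^{-1/2}2^{\gamma N}$, which is immaterial since only the ratio of moments enters), whereas the paper's route needs only crude large deviations and is designed to transfer to models that are merely approximately hierarchical, where a level-by-level barrier is unavailable. On the other hand, your finer truncation is exactly the machinery one must eventually build to capture the $\frac{3}{2\be_c}\log N$ correction, so nothing is wasted.
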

Combining with \eqref{upper}, one therefore gets 
\beq \label{field_trivial}
\lim_{N\to \infty} \frac{1}{N} \max_{\alpha\in T_N} X_{\alpha} = \be_c,
\eeq
exactly as in the REM. The result \eqref{field_trivial} is well-known, with the first proof (through martingale techniques) seemingly due to Biggins \cite{biggins}. The method of proof I am presenting here seems however to be new. It rests on elementary considerations only.  

\begin{proof} {\bf (The multi-scale refinement: proof of Fact \ref{lower_prop})}
Pick a number $K\in \N$, which will play the role of the number of scales in the procedure. (The correct choice, i.e. the number of levels which do the job for approximating
the field,  will eventually depend on the level of precision $\eta$.) Assuming without loss of generality that $N/K$ is an integer, we split the strings of length $N$ into $K$ blocks of length $N/K$. More precisely, 
with $j_l \defi l N/K$ for $l\in \N$, and for $\alpha \in T_N$ we write 
\beq \label{multiscale}
X_{\alpha} = \sum_{l = 1}^K X^l_{\alpha}, \quad \quad X^l_{\alpha} \defi \sum_{i = j_{l-1}+1}^{j_{l}} \xi_{\alpha_1, \dots, \alpha_i}^i.
\eeq
This should be viewed as a "coarse graining" of the field. Remark that the $X^l_{\alpha}$ are centered Gaussians of variance $N/K$ but they are no longer, in general, independent. 
 Finally, we introduce the counting random variable
\beq \label{getgoing}
\mathcal N_N(\la_2, \dots, \la_K) \defi \sharp\left\{ \alpha \in T_N:  X^l_{\alpha} \geq \lambda_l N,\, l=2\dots K \right\}.
\eeq
The mystery behind the reason for {\it not} considering the first level $l=1$ will gradually disappear
in the course of the proof. (At the risk of being opaque: not considering the first levels allows us to gain independence, breaking the self-similarity of the field.) 

The claim is that for given $\vare >0$ and $\la_l \leq \la(\vare, K), \,l=2, \dots K$ where
\beq \label{cond}
\la(\vare, K) \defi \frac{\be_c}{K} (1-\vare) 
\eeq
there exists $\de = \de(\vare, K)$ such that
\beq \label{lower}
\PP\left[ \mathcal N_N(\la_2, \dots, \la_K) > 0 \right] \geq 1 - e^{-\de N}.
\eeq
This is the fundamental estimate. We postpone its proof for the moment and show how it steadily yields the desired result. Shortening $X_{\alpha}(2) \defi\sum_{l=2}^K X_{\alpha}^l$, by
\eqref{lower}, 
\beq  \label{max_mod}
\max_{\alpha \in T_N} X_{\alpha}(2) \geq \sum_{l=2}^K \la(\vare, K)N = \be_c \left(1-\frac{1}{K} \right)(1-\vare)N,
\eeq
on a set of probability greater than $1 - e^{-\de N}$. Now,
\beq \bea \label{main_task}
& \PP\left[ \max_{\alpha \in T_N} X_\alpha \leq \be_c (1-\eta) N\right] \leq \PP\left[ \max_{\alpha \in T_N} X_{\alpha}(2) < \be_c \left(1-\frac{1}{K} \right)(1-\vare)N \right] + \\
& \hspace{2cm} + \PP\left[ \max_{\alpha \in T_N} X_\alpha \leq \be_c (1-\eta) N, \; \max_{\alpha \in T_N} X_{\alpha}(2) \geq \be_c \left(1-\frac{1}{K} \right)(1-\vare)N \right]\,.
\eea \eeq
The first probability on the r.h.s is exponentially small by \eqref{max_mod}. On the other hand, the event for the 
second probability is included in the event that 
\beq \label{getting_rid}
\min_{[\alpha]_1 \in T_{N/K}} X_{\alpha}^1 \leq \be_c\left\{ (1-1/K)(1-\vare)-(1-\eta)\right\} N = - \be_c \Delta N,
\eeq
where
$
\Delta = \Delta(\eta, \vare, K)  \defi \eta-\vare (1-1/K).
$
By symmetry, 
\beq \bea \label{symmetry}
\PP\left[ \min_{[\alpha]_1 \in T_{N/K}} X_{\alpha}^1 < - \be_c \Delta N \right] & = \PP\left[ \max_{[\alpha]_1 \in T_{N/K}} X_{\alpha}^1 > \be_c \Delta N \right] \\
& \leq 2^{N/K} \PP\left[ X_{\boldsymbol{1}}^1 > \be_c \Delta N \right] \\
& \lesssim \exp N \log 2 \left( \frac{1}{K} - K \Delta^2 \right)
\eea \eeq
Choosing $K= 2/\eta, \,\vare = \eta/2$ we have $\frac{1}{K} - K \Delta^2 < 0$ hence the first probability in \eqref{main_task} is also exponentially small,
settling claim \eqref{lower_bound}. 

Claim \eqref{lower} follows from the {\it Paley-Zygmund inequality}. For this one needs to control first- and second moments. Clearly,  
\beq\bea \label{first_mom}
\mathbb{E}\left[ \mathcal N_N(\la_2, \dots, \la_K)\right] &= 2^N \PP\left[X^l_{{\boldsymbol 1}} \geq \lambda_l N,\, l=2\dots K \right] \\
& = 2^N \prod_{l=2}^K \PP\left[X^l_{{\boldsymbol 1}} \geq \lambda_l N \right] \\
& \cong 2^N \exp\left( - \frac{NK}{2} \sum_{l=2}^K \la_l^2 \right).
\eea \eeq
For two strings $\alpha, \alpha'$ and $ l=2, \dots, K$, shorten $p_l(\alpha, \alpha') \defi \PP\left[X_{\alpha}^l \geq \la_l N, X_{\alpha'}^l \geq \la_l N \right]$
(omitting the $\la$-dependence for ease of notation) and write
\beq \bea \label{sec_sum}
\mathbb{E}\left[\mathcal N_N^2(\la_2, \dots, \la_K)\right] &= \sum_{\alpha, \alpha' \in T_N\times T_N} \prod_{l=2}^K  p_l(\alpha, \alpha') \\
& = \sum_{[\alpha]_{j_1}\neq  [\alpha']_{j_1}} + 
\sum_{r=1}^{K-1} \sum_{\substack{[\alpha]_{j_r}=  [\alpha']_{j_r}\\
[\alpha]_{j_{r+1}}\neq [\alpha']_{j_{r+1}}}} 
\eea \eeq
The first sum on the r.h.s above is less than $\mathbb{E}\left[\mathcal N_N(\la_2, \dots, \la_K)\right]^2$; 
recovering this term is the crucial reason for not considering the first level in the definition of the counting random
variable \eqref{getgoing}. 

Let now $1 \leq r \leq K-1$, and consider two strings $\alpha, \alpha'$ satisfying
\beq \label{branching_off}
[\alpha]_{j_r}=  [\alpha']_{j_r},\qquad \text{but}\qquad [\alpha]_{j_{r+1}}\neq [\alpha']_{j_{r+1}}.
\eeq
The number of such couples of strings is {\it at most}
\beq \label{pathcounting}
2^{r N/K} \times 2^{2 N/K} \times 2^{2 (K-r-1) N/K} = 2^{(2K-r)N/K}.
\eeq
Moreover, for $\alpha, \alpha'$ such that $[\alpha]_{j_l}=  [\alpha']_{j_l}$ ($l\leq r$) one has
\beq \label{eq}
p_l(\alpha, \alpha') = \PP\left[ X_{\alpha}^l \geq \la_l N \right] \cong \exp\left( - \frac{KN}{2} \la_l^2\right),
\eeq
\beq \label{bound}
p_{r+1}(\alpha, \alpha') \leq \PP\left[ X_{\alpha}^{r+1} \geq \la_{r+1} N \right] \cong \exp\left( - \frac{KN}{2} \la_{r+1}^2\right)
\eeq
and, by independence, 
\beq \label{ind}
p_{l}(\alpha, \alpha') \cong \exp\left( - KN \la_{l}^2\right), \quad l= r+2, \dots, K.
\eeq
The last property \eqref{ind} is an instance of the aforementioned phenomenon of {\it decoupling at mesoscopic scales}. In virtue of the underlying 
correlation structure, the decoupling holds for the Gaussian hierarchical field exactly. 

The bound \eqref{bound} is the {\it REM-approximation}; for this the underlying  structure plays no role as one simply proceeds by ``worst case scenario''.

Using \eqref{eq}-\eqref{ind} in \eqref{sec_sum} we obtain
\beq \bea \label{second_mom}
& \mathbb{E}\left[\mathcal N_N^2(\la_2, \dots, \la_K)\right] \lesssim \mathbb{E}\left[\mathcal N_N(\la_2, \dots, \la_K)\right]^2+\\
& \qquad + \sum_{r=1}^{K-1} 2^{(2K-r)N/K} \exp\left(- \frac{KN}{2} \sum_{j=2}^{r+1} \la_j^2- KN \sum_{j=r+2}^K \la_j^2\right),
\eea \eeq
with the last sum set to zero when meaningless. Introducing 
\beq
\mathfrak{R}_m = \sum_{r=1}^{K-1} \exp N \left(\frac{K}{2} \sum_{j=2}^{r+1} \la_j^2 - \frac{r}{K} \log 2\right),
\eeq
it follows from Paley-Zygmund, \eqref{second_mom}, and the elementary estimate $\frac{1}{1+a} \geq 1-a$, that 
\beq \bea \label{PZ}
\PP\left[\mathcal N_N(\la_2, \dots, \la_K)>0 \right] & \geq \frac{\mathbb{E}\left[\mathcal N_N(\la_2, \dots, \la_K)\right]^2}
{\mathbb{E}\left[\mathcal N_N(\la_2, \dots, \la_K)^2\right]} \geq \frac{1}{1+\mathfrak{R}_N} \geq 1- \mathfrak{R}_N.
\eea \eeq
One readily checks that $\mathfrak{R}_N$ is exponentially small provided $\la_l < \frac{\be_c}{K}$ for $l=2, \dots, K$,
settling \eqref{lower}.  

\end{proof}

\subsection{Ingredients for a general recipe}
The method described above can be applied to a number of models. We shall thus dwell on its main steps. The discussion is intentionally informal.\\

{\sf Step 0.}  A general thread when addressing the leading order (but it is also a good idea 
whenever the finer properties are concerned) is to {\it always} give oneself an "epsilon of room", such as in  the $\vare$ in \eqref{upper}, or the $\eta$ in \eqref{lower_bound}.\\

{\sf Step 1.} If one is interested in the leading order, the first step for models in the REM-class is 
always easy: the upper bound to the leading order should always follow from a certain Markov inequality (union bound), cfr. \eqref{upper}. \\

{\sf Step 2.} As for a lower bound to the leading order, 
the multiscale refinement seems particularly efficient. This relies on a number of intermediate steps: 
\begin{itemize}
\item Coarse graining, such as \eqref{multiscale}. In the particular case of the hierarchical field, 
this is particularly simple since one can easily "visualize" the levels.
\item Breaking of the self-similarity, e.g. by not considering the first level as in \eqref{getgoing}. In the hierarchical field, based on the crucial decoupling which happens from a certain level downwards, this step is easy. In general, the idea behind this step is to "gain independence". 
\item {\bf REM approximation}. This is the arguably the deepest point. One simply {\it drops} correlations within the scales, cfr. \eqref{bound}. Needless to say, this leads to a 
dramatic simplification of the computations. The method is effective because we are not forgetting correlations globally, but only {\it within} a given scale. The error in the approximation is eventually due to the fact that one proceeds, within scales, by "worst case scenario". On the other hand, if a higher level of precision is sought, one simply increases the number of scales. 
\end{itemize}

{\sf Step 3.} Paley-Zygmund, \eqref{PZ}. This inequality is among the few truly universal methods available. It plays no specific role whether the underlying random variables are Gaussian (although this feature makes computations particularly straightforward).\\

It should be clear that minor modifications of the path outlined above allow to address quantities such as free energy, entropy, etc. I will not go into that.

\section{Beyond the leading order}

\subsection{Matching}
Once the leading order of the hierarchical field has been addressed, one may move to the subleading 
orders. For this, the method of matching allows us to make educated guesses. It is natural to expect that 
the level of the maximum of the hiearchal field is given by
\beq 
a_N \defi \be_c N + \omega_N,
\eeq
where $\omega_N = o(N)$ as $N\to \infty$. The question is of course how to  choose $\omega_N$. Consider to this end the extremal process of the hierarchical field
\beq
\Xi_N \defi \sum_{\alpha \in T_N} \de_{X_\alpha - a_N}
\eeq
A naive use of the method of matching, namely requiring that 
$\mathbb{E} [\Xi_N(A)]$ (for compact $A\subset \R$) remains of order one in the limit $N\to \infty$, yields
\beq \label{wrong guess}
\omega_N = -\frac{1}{2 \be_c}\log N,
\eeq
just as in the REM case. This turns out to be {\it wrong}. By linearity of the expectation, in $\mathbb{E}[\Xi_N(A)]$ we are completely dismissing
the underlying correlations, but these are, in the case of the hierarchical field, too severe. In fact, they are severe enough to have an impact already detectable at the level of the maximum (contrary to the critical GREMs, where they only reduce the intensity of the limiting extremal process). 

A key observation is that the approach for the leading order, see in particular \eqref{getgoing} and \eqref{cond}, suggests that extremal configurations must necessarily satisfy
\beq 
X_{\alpha}^{l} \leq \frac{\be_c}{K} N, \quad l=1\dots K-1,
\eeq
($K$ is the number of levels in the coarse graining), which would then imply that
\beq
\sum_{i=1}^l X_{\alpha}^{l} \leq \frac{l}{K} \be_c, \quad l=1 \dots K-1.
\eeq
In the (ideal) limit of $N$ levels of coarse-graining, for an $\alpha$-configuration to contribute to the extremal process it must hold that
\beq \label{guessguess}
\sum_{i=1}^l \xi_{\alpha_1, \dots, \alpha_i}^{i} \leq l \be_c .
\eeq
As in the critical GREM$(K)$, let us thus consider a thinned version of the 
extremal process,
\beq
\Xi_N^{(\leq)} \defi \sum_{\alpha}^{(\leq)} \de_{X_\alpha - a_N},
\eeq
where $\sum^{(\leq)}$ refers to those configurations satisfying \eqref{guessguess}. Following the method of matching, we shall require that for given compact $A \subset \R$, 
\beq
\mathbb{E}\left[ \Xi_N^{(\leq)}(A)\right] \sim 1 \quad (m\to \infty).
\eeq
Shortening $\boldsymbol 1 = (1, \dots, 1)$ for the unit string of length $N$, we have
\beq \bea \label{rematching}
&\mathbb{E}\left[ \Xi_N^{(\leq)}(A)\right]  = 2^N \PP\left[
\sum_{i=1}^l \xi_{ [\boldsymbol 1]_i}^{i} \leq l \be_c\; (l  \leq N-1),\;  X_{\boldsymbol 1} - a_N \in A\right]\\
& = 2^N \int_{A}\PP\left[
\sum_{i=1}^l \xi_{ [\boldsymbol 1]_i}^{i} \leq l \be_c\; (l  \leq N-1) \Big| X_{\boldsymbol 1} = a_N +x \in A\right] \exp\left( -\frac{(x+a_N)^2}{2N}\right)\frac{dx}{\sqrt{2\pi N}}. 
\eea \eeq
By the usual expansion, the term of the Gaussian density yields a contribution 
\beq \label{usual}
\sim 2^{-N} \exp\left(-\be_c x\right) \frac{\exp - \omega_N \be_c}{\sqrt{N}}.
\eeq
In order to get a handle on the conditional probability, we use a similar trick as in the critical  GREM$(2)$, i.e. we shift the sum by $\frac{l}{N} X_{\boldsymbol 1} $ so that the event becomes
\beq \bea
\sum_{i=1}^l \xi_{ [\boldsymbol 1]_i}^{i}   - \frac{l}{N} X_{\boldsymbol 1}     \leq l\be_c - \frac{l}{N} X_{\boldsymbol 1} \,.
\eea \eeq
Inspection of the covariance then shows that the Gaussian vector 
\beq 
\left( \sum_{i=1}^l \xi_{ [\boldsymbol 1]_i}^{i}   - \frac{l}{N} X_{\boldsymbol 1}\right)_{l=1, 2, \dots, N-1}
\eeq is independent of $X_{\boldsymbol 1}$. Using this, we get
\beq \bea
& \PP\left[\sum_{i=1}^l \xi_{ [\boldsymbol 1]_i}^{i} \leq l \be_c,  \; l  \leq N-1 \Big| X_{\boldsymbol 1} = a_N +x \right] \\
& = \PP\left[\sum_{i=1}^l \xi_{ [\boldsymbol 1]_i}^{i}   - \frac{l}{N} X_{\boldsymbol 1}     \leq l \be_c - \frac{l}{N} X_{\boldsymbol 1}, l \leq N-1 \Big| X_{\boldsymbol 1} = a_N +x\right] \\
& = \PP\left[\sum_{i=1}^l \xi_{ [\boldsymbol 1]_i}^{i}   - \frac{l}{N} X_{\boldsymbol 1}     \leq l \be_c - \frac{l}{N} (x+ a_N), l \leq N-1 \Big| X_{\boldsymbol 1} = a_N +x \right] \\
& = \PP\left[\sum_{i=1}^l \xi_{ [\boldsymbol 1]_i}^{i}   - \frac{l}{N} X_{\boldsymbol 1}     \leq l \be_c - \frac{l}{N} (x+ a_N), l \leq N-1 \right],
\eea \eeq
the last step by independence. But by definition $a_N = \be_c N +\omega_N$, hence the above 
simplifies to 
\beq \label{bridge!}
\PP\left[\sum_{i=1}^l \xi_{ [\boldsymbol 1]_i}^{i}   - \frac{l}{N} X_{\boldsymbol 1}     \leq - \frac{l}{N} (x+ \omega_N), l \leq N-1 \right]
\eeq
As it turns out, the law of the process $l\mapsto \sum_{i=1}^l \xi_{ [\boldsymbol 1]_i}^{i}   - \frac{l}{N} X_{\boldsymbol 1}, l =1, 2, \dots, N$ coincides with that of a (discrete) Brownian bridge 
of lifespan $N$ observed at the times $l=1, 2, \dots, N$. In particular, \eqref{bridge!} is 
the probability that a (discrete) Brownian bridge stays below the line $l\mapsto -(l/N)(x+\omega_N)$ 
at the observation-times $l=1, 2, \dots, N-1$. Such probability can be computed in different ways (e.g. using the reflection principle): the upshot is 
\beq
\PP\left[\sum_{i=1}^l \xi_{ [\boldsymbol 1]_i}^{i}   - \frac{l}{N} X_{\boldsymbol 1}     \leq - \frac{l}{N} (x+ \omega_N), l \leq N-1 \right] \sim \frac{1}{N},
\eeq
which is  {\it vanishing} in the limit $N\to \infty$. This has a dramatic consequence on the matching. 
In fact, combining this asymptotics with \eqref{usual} in \eqref{rematching} we see that
\beq
\mathbb{E}\left[ \Xi_N^{(\leq)}(A)\right] \sim \frac{1}{N}\frac{\exp - \omega_N \be_c}{\sqrt{ N}} \int_A e^{-\be_c x} dx \quad (N\to \infty),
\eeq
and this remains of order one in the considered limit only for
\beq
\omega_N = - \frac{3}{2\be_c} \log N.
\eeq
Summarizing, the above suggests that the level of the maximum in the hierarchical field is 
\beq 
a_N= \be_c N - \frac{3}{2\be_c} \log N.
\eeq

This guess turns out to be correct. I will definitely not give a proof of this here, as it requires some heavy technicalities,  but refer the reader to e.g. \cite{bramson} for details. On the other hand, 
I will sketch below the main steps behind a general approach to the subleading order for models in the REM-class. 

\begin{rem} A complementary route to these issues is provided by Fyodorov, Le Doussal and Rosso  \cite{fyodorov_rosso_doussal} within the multi-fractal formalism.
The idea consists in addressing the {\em density} of the counting random variable $\mathcal N_N(\la)$. One can then show (through the analysis of the moments) that, for specific values of $\la$, this density displays a power-law tail with $\la$-dependent exponent: by "matching to unity" (very much in the spirit of the method discussed above), one can then derive important information, such as the level of the maximum of the field.
\end{rem}

\subsection{How to get started: multi-scale Markov} \label{smart markov}
By matching, we have thus made a natural guess for the level of the maximum. However, when trying to make this rigorous, one faces an immediate difficulty. In fact, a most natural step in a rigorous treatment would be to show that, with $a_N = \be_c N- (3/2 \be_c) \log N$ as above, 
\beq \label{first diff}
\lim_{Y\to +\infty} \sup_{N} \PP\left[\exists \alpha \in T_N: X_\alpha - a_N \geq Y\right] = 0. 
\eeq
This would at least suggest that we are indeed on the right scale. A naive attempt to check \eqref{first diff} by union bounds and Markov inequality yields
\beq \bea
 \PP\left[\exists \alpha \in T_N: X_\alpha - a_N \geq Y\right] & \leq \frac{2^N}{Y} \int_{Y}^\infty 
\exp\left( -\frac{(x-a_N)^2}{2N}\right)\frac{dx}{\sqrt{2\pi N}} \\
& \sim \frac{N}{Y} \int_{Y}^\infty e^{-\be_c x} dx,
\eea \eeq
(the last asymptotics by the usual expansion of the Gaussian density), and this explodes as 
$N\to \infty$. In other words, a plain application of Markov inequality is inconclusive.
This is of course due to the logarithmic correction which is larger than in the REM-case. 
{\it But if not Markov, what else?} A technically convenient way out is to first give 
ourselves an epsilon of room, i.e. to consider 
\beq 
a_N^{(\vare)} \defi \be_c N - \frac{3-\vare}{2 \be_c} \log N,
\eeq
for some $\vare>0$. Remark that this is higher than the alleged level of the maximum. 
The goal is to prove that with overwhelming probability, no configuration will be found reaching these heights, and this will imply that $a_N = a_N^{(\vare=0)}$ is at least an upper bound to the level of the maximum of the hierarchical field. For this, we will rely on a 
Markov-type inequality which keeps track of the mutliple scales. 

More precisely, consider the discrete function 
\beq 
l \mapsto f_N(l) =  \be_c l + K \log N
\eeq
for some large $K$ which will be identified later. Remark that this is just a "perturbation" of the linear function $\frac{(\cdot)}{N} \be_c$ which arises in the analysis of the leading order of the maximum. The form of the perturbation is not really important, and $K \log N$ is only a convenient choice. 
The claim is now that for $\delta>0$ and large enough $K$,
\beq \label{smartmark one}
\PP\left[ \exists l\leq N, \alpha \in T_l: \; X_{[\alpha]_l} \geq f_N(l)\; \text{for some}\; l \in [1, N-1] \right] \leq \delta. 
\eeq
The proof of this fact is elementary, and goes by Markov's inequality. It holds: 
\beq \bea
& \PP\left[ \exists \alpha \in T_N: \; X_{[\alpha]_l} \geq f_N(l)\; \text{for some}\; l \right] \\
& \leq \sum_{l=1}^{N-1} 2^l \PP\left[ X_{[\boldsymbol 1]_l} \geq f_N(l) \right] \\
& \leq \sum_{l=1}^{N-1} 2^l \int_{0}^\infty \exp\left(-\frac{(x+f_N(l))^2}{2l} \right) \frac{dx}{\sqrt{2\pi l}}.
\eea \eeq
By the usual quadratic expansion, and elementary bounds, the above is easily seen to be
\beq 
\sim \sum_{l=1}^{N-1}  \frac{1}{\sqrt{l}} \exp\left( - \be_c K \log N\right), 
\eeq
which can be made as small as wished by choosing $K$ large enough. This proves \eqref{smartmark one}. 

Equation \eqref{smartmark one} is an important piece of {\it a priori} information. In fact, it will allow us to prove that for given $\vare>0$ the probability 
that there exists an $\alpha \in T_N$ such that  $X_{\alpha} \geq a_N^{(\vare)} $ is vanishingly small. 
To see this, let us introduce, for $\alpha \in T_N$, the event 
\beq 
B^{f_N}[\alpha] = \left\{ X_{[\alpha]_l} \leq f_N(l), \; 0 \leq l \leq N   \right\}.
\eeq
We then write 
\beq \bea
& \PP\left[\exists \alpha \in T_N: X_{\alpha} \geq a_N^{(\vare)} \right] = 
\PP\left[\exists \alpha \in T_N: X_{\alpha} \geq a_N^{(\vare)},  B^{f_N}[\alpha] \right] +\\
& \hspace{3cm} + \PP\left[\exists \alpha \in T_N: X_{\alpha} \geq a_N^{(\vare)},  B^{f_N}[\alpha]^\texttt{c} \right]
\eea \eeq
By \eqref{smartmark one}, the second probability on the r.h.s. can be made as small as wished, 
so it remains to address the first term. Proceeding by union bound and conditioning, we get 
\beq \bea \label{proceeding by}
& \PP\left[\exists \alpha \in T_N: X_{\alpha} \geq a_N^{(\vare)},  B^{f_N}[\alpha] \right] \\
& \quad \leq 2^N \int_{} \PP\left[B^{f_N}[\boldsymbol 1] \Big|  X_{\boldsymbol 1} = x+ a_N^{(\vare)} \right] 
\exp\left( - \frac{(x+a_N^{(\vare)})^2}{2 N}\right) \frac{dx}{\sqrt{2\pi l}}
\eea \eeq
The usual Gaussian estimates, recalling that $a_N^{(\vare)} = \be_c N - (3-\vare)/2 \be_c \log N$, 
yield \beq \label{always the same}
2^N \exp\left( - \frac{(x+a_N^{(\vare)})^2}{2 m}\right) \frac{dx}{\sqrt{2\pi l}} \sim 
N^{1-\vare}  e^{-\be_c x} ,
\eeq
in first approximation. So it remains to get a handle on the conditional probability:
\beq \bea \label{intermediate}
& \PP\left[B^{f_N}[\boldsymbol 1] \Big|  X_{\boldsymbol 1} = x+ a_N^{(\vare)} \right]  \\
& \quad = \PP\left[ X_{[\boldsymbol 1]_l} \leq f_N(l), l=1\dots N-1  \Big|  X_{\boldsymbol 1} = x+ a_N^{(\vare)} \right] 
\eea \eeq
It is easy to see that the process $l \mapsto X_{[\boldsymbol 1]_l}$ {\it conditioned} on the terminal point is a (discrete) Brownian bridge with drift. More precisely, 
with $\mathfrak Z_N(l)$ a Brownian bridge of lifespan $N$, starting and ending in zero, and $\mathcal L$ denoting the law of such process, 
\beq \label{law}
\mathcal L\left( X_{[\boldsymbol 1]_l}, l\leq N \;\Big| \;
X_{\boldsymbol 1}= x+ a_N^{(\vare)} \right) = \mathcal L\left( \mathfrak Z_N(l) + \frac{l}{N}(x+a_N^{(\vare)}), l\leq N\right)
\eeq
hence 
\beq \bea
\eqref{intermediate} & = \PP\left[ \mathfrak Z_N(l) + \frac{l}{N}(x+a_N^{(\vare)}) \leq 
f_N(l), l = 1,\dots,  N-1 \right] \\
& \leq \PP\left[ \mathfrak Z_N(l) \leq 
K' \log N, l = 1,\dots,  N-1 \right]
\eea \eeq
(since $\geq 0$) and where $K' \defi K + \frac{(3-\vare)}{2 \be_c} $.
One can get a handle on the above probability with the reflection principle: the upshot is 
\beq \label{upshot bbridge}
\PP\left[ \mathfrak Z_N(l) \leq K' \log N, l = 1\dots N-1 \right] \sim \frac{(\log N)^\delta}{N},
\eeq
for some $\de = \de(K')$. The crucial point is that this behaves 
as $1/N$ up to {\it logarithmic} (in $N$) corrections. Using this and \eqref{always the same}  we get that  \eqref{proceeding by} 
is vanishingly small thanks to the additional term $N^{-\vare}$. (This is the ultimate reason 
for giving oneself "an epsilon of room"). Since this holds for any $\vare$, we have thus proved that 
\beq \label{upper bound full}
\max_{\alpha \in T_N} X_\alpha \leq \be_c N - \frac{3}{2\be_c}(1+o(1)) \log N,
\eeq
with overwhelming probability. 

\begin{rem}
Gaussianity, though naturally useful for computations, is not 
crucial for the method to work. In order to compensate
\eqref{always the same} one needs that the conditioned process \eqref{intermediate} "behaves" as a Brownian bridge, in the sense that 
the probability of staying below straight lines (or, more generally: envelopes) behaves to leading order as $1/N$. This is a delicate technical point: the level of precision required cannot be achieved by straightforward applications of, say, the "Hungarian theorems" \cite{hungarian}. To my knowledge, the best (and most flexible) result available in this direction is \cite{ford}.
\end{rem}

\subsection{Entropic repulsion and the restricted Paley-Zygmund}
Once the upper bound \eqref{upper bound full} has been established, it remains to identify a 
matching lower bound (up to $o(\log N)$-terms, say).  The main idea here is the physical principle of entropic repulsion, combined with a restricted Paley-Zygmund's inequality. In fact, it emerges from the above analysis that the "path" of an extremal configuration lies typically below the line $l \mapsto f_N(l)$. As we have seen, conditioning on the terminal point gets rid of the drift, and one ends up with a (discrete) Brownian bridge staying below a straight line for most of the time. It is well known that the strategy used by such Brownian bridge
to avoid hitting the line is to "go negative". To formulate this, we shall use a terminology introduced in \cite{abk},  that of {\it entropic envelope}. To define this, pick $0 < \gamma < 1/2$ and let 
\beq
E_{N, \gamma}(l) = \begin{cases}
 - l^\gamma, & 1 \leq l \leq N/2 \\
 -(N-l)^\gamma & N/2 \leq l \leq N. 
\end{cases}
\eeq
One can then prove (see \cite{abk}) that  
\beq 
\lim_{r\to \infty} \lim_{N\to \infty} \frac{\PP\left[ \mathfrak Z_N(l) \leq  K' \log N, l = r, \dots, N-r \right] }{\PP\left[ \mathfrak Z_N(l) \leq  E_{N, \gamma}(l), l = r, \dots, N-r \right] } = 1. 
\eeq
In other words, the probability that a Brownian bridge stays below a straight line is asymptotically the same as the probability of staying below the entropic envelope (as long as $\gamma< 1/2$, strictly). This suggests the following strategy to prove that $a_N$ is also a lower bound to the level of the maximum. Not surprisingly, we will first give ourselves an epsilon of room: let $\vare>0$ and
\beq 
a_N^{(-\vare)} \defi \be_c N - \frac{3+\vare}{2\be_c} \log N, 
\eeq 
Remark that this is {\it lower} than $a_N$. The goal is to prove that 
\beq 
\PP\left[ \max_{\alpha \in T_N} X_{\alpha} \geq a_N^{(-\vare)} \right] \to 1, \quad N\to \infty.
\eeq
For $r\in \{1, \dots N/2\}$, and $E$ the entropic envelope (to given $\gamma$) as above we write 
\beq
\max_{\alpha \in T_N \wedge E[r, N-r]} X_{\alpha}
\eeq
for the maximum {\it restricted} to those 
$\alpha \in T_N$ such that $X_{[\alpha]_l} \leq E_{N, \gamma}(l)$ for $l = r\dots N-r$. 
Furthermore, let 
\beq 
\mathcal Z_N^{(-\vare)} \defi \sum_{\alpha \in T_N \wedge E[r, N-r]} 1\left\{X_\alpha \geq a_N^{(-\vare)}\right\}
\eeq
We then have:
\beq \bea \label{pz}
\PP\left[ \max_{\alpha \in T_N} X_{\alpha} \geq a_N^{(-\vare)} \right] & \geq \PP\left[ \max_{\alpha \in T_N \wedge E[r, N-r]} X_{\alpha} \geq a_N^{(-\vare)} \right] \\
&\geq \frac{\mathbb{E}\left[ \mathcal Z_N^{(-\vare)} \right]^2 }{\mathbb{E}\left[ \left(\mathcal Z_N^{(-\vare)}\right)^2 \right]},
\eea \eeq
the first inequality since we consider the maximum over a smaller subset of $T_N$, and the 
second inequality by Payley-Zygmund. One can then show (essentially) that the r.h.s. of \eqref{pz} converges to one, in the limit $N\to \infty$ first (provided that $r\to \infty$ slowly). In other words, 
the mean of the second moment behaves asymptotically as the square of the first moment. The reason behind this is not difficult to understand, the main observation being already present in the 
critical GREM with two levels, see \eqref{cons_start}-\eqref{cons_finish}. In fact the second moment 
of $\mathcal Z_N^{(-\vare)}$ can be rewritten as a sum over all possible couples of configurations, 
and this sum can be re-arranged as a sum over the level up to which the two configurations 
coincide. Now: 
\begin{itemize}
\item either did the most recent common ancestor branch "very early", in which case the 
two configurations are (essentially) independent: this gives a contribution which is asympotically equivalent to the first moment squared; 
\item or the most recent common ancestor branched "late": in which case, having two configurations which reach the level of the maximum at level $N$ amounts 
to finding {\it within the same tree} attached to the most recent common ancestor {\it two} Brownian 
paths making the unusually large jump which brings them from the lower envelope to the level of the maximum. As shown in \eqref{smallsmall}, this probability is however small, hence the contribution to the second moment coming from couples with "late" common ancestor branching does not contribute to the second moment (cfr. \eqref{cons_finish}).
\end{itemize}

At a conceptual level the situation is therefore pretty simple: the picture is the same as that of the critical GREM(2). In particular, the main ideas are already laid out in the proof of  \eqref{claim_lower}.  Due to the "continuous" branching, the technical details in the case of the hierarchical field 
are however more demanding, so I will refer the reader to \cite[Lemma 11]{bramson}. 

\subsection{Interpolating between REM and hierarchical field, or: from 1 to 3}
Here is a model which, to my knowledge, has not been addressed in the mathematical literature. I am indebted to Bernard Derrida for discussions on this and related issues.  Loosely, the model interpolates between $K$ (GREM) and $N$ (hierarchical field). More precisely, pick $0 < \alpha < 1$ and consider a GREM with $N^\alpha$ 
levels: the configurations are $N^{\alpha}$-dimensional vectors, 
\beq
\alpha = (\alpha_1, \alpha_2, \dots, \alpha_{N^\alpha}), \qquad \forall_i: \alpha_i = 1\dots, 2^{N^{1-\alpha}} 
\eeq
and the energies are given by 
\beq
X_\alpha = X_{\alpha_1}^{(1)}+  X_{\alpha_1, \alpha_2}^{(2)}+ \dots + X_{\alpha_1, \dots, \alpha_{N^{\alpha}}}^{(N^\alpha)}
\eeq
where the $X_{\cdot}^{(i)}$ are all independent centered Gaussians with variance 
$N^{1-\alpha}$. 

By matching, and Brownian scaling, it is natural to conjecture 
that 
\beq \label{inter}
\max_{\alpha} X_\alpha = \be_c N - \frac{2\alpha+1}{2 \be_c} \log N + o(\log N), 
\eeq
with overwhelming probability. This problem is currently investigated in the PhD thesis 
of Marius Schmidt \cite{schmidt}. \\

Finite size corrections for the free energy of a GREM when the number of levels becomes infinite have been addressed by Cook and Derrida \cite{cook_derrida} by the use of traveling wave equations. This allows in particular to recover the logarithmic correction for the hierarchical field ($\alpha=1$
in the above setting). It would be interesting to have a proof of \eqref{inter} in full generality, and to understand the lower order corrections $o(\log N)$. This would open a path towards 
the extremal processes which arise in the large $N$ limit. One naturally expects the ensuing processes
to "interpolate" between the Poisson point process associated to the REM ($\alpha =0$, in which case the logarithmic correction equals $1$), and 
the Poisson cluster process first appeared in the context of branching Brownian motion/hierarchical field ($\alpha=1$, in which case the logarithmic correction equals $3$), see e.g. \cite{anton, gouere}. \\

Finally, it should be remarked that among all GREM-type models, the case $\alpha=1$ is the most correlated which still falls in the REM-class: one gets  the largest possible (logarithmic) correction while leaving the leading order "untouched". More severe correlations already affect the leading order, which is naturally expected under the light of, say, Slepian's lemma. At the level of the leading order, the situation has  been completely solved by Bovier and Kurkova \cite{anton_crem}. 
On the finer level, progress has been made  recently \cite{bovier_hartung, bovier_hartung_2, fang_zeitouni, maillard_zeitouni, mallein}, but much remains to be understood. There is a whole world of extremal {\it hierarchical} processes which still awaits to be discovered: according to the Parisi theory, these should be, to a vast extent, {\it universal}.

\chapter{Applications}

\section{The 2-dim Gaussian free field in a box}
Consider the $N\times N$ box $V_N \defi ([0, N]\cap \Z)^2$.  
$\partial V_N$ stands for its boundary on the two dimensional lattice. Denote by  
$w_m$ simple random walk started in $V_N$ and killed at time
$\tau = \min \left\{m: w_m \in \partial V_N\right\}$, when hitting the boundary. The Gaussian free field, GFF for short, is the 
mean zero Gaussian field $\{X_z^N \}_{z\in V_N}$ whose covariance is given by the Green function of the 
killed walk, 
\[
\mathbb{E}\left[X_x^N X_y^N\right] =  E^x\left[ \sum_{m=0}^\tau \1\{w_m = y\}\right]
\]
Shorten $g \defi 2/\pi$. By Markov inequality, for $\vare > 0$,  
\beq
\lim_{N\to \infty} \PP\left[ \max_{x\in V_N} X_x^N \geq  2 \sqrt{g} (1+\vare) \log N \right] = 0.
\eeq
Since this holds for arbitrary $\vare$, we have a first upper-bound
\beq \label{upper_gff}
\lim_{N\to \infty} \max_{x\in V_N} \frac{X_x^N}{\log N} \leq 2 \sqrt{g}.
\eeq
This simple {\it REM-bound}  turns out to be tight. 
 
\begin{prop}[Bolthausen, Deuschel, and Giacomin \cite{bdg}]
\[
\lim_{N\to \infty} \max_{x\in V_N} \frac{X_x^N}{\log N} = 2 \sqrt{g},
\]
in probability. 
\end{prop}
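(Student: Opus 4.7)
The upper bound $\limsup_N \max_{x\in V_N} X_x^N/\log N \leq 2\sqrt{g}$ is already in hand from the naive Markov (``REM'') bound displayed in \eqref{upper_gff}, so the task is to prove the matching lower bound in probability. The plan is to adapt the multiscale refinement of the second moment method developed for the Gaussian hierarchical field: the GFF is, morally, a continuous-parameter analogue of the hierarchical field on a $4$-ary tree of depth $L = \log_2 N$, with independent Gaussian increments of variance $\approx g \log 2$ per scale.

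\textbf{Step 1 (Coarse graining via the domain Markov property).} Fix $K\in\N$, large but depending only on the target precision $\eta>0$; assume for simplicity that $N/2^K$ is an integer. Partition $V_N$ dyadically: at level $l=0,1,\dots,K$, split $V_N$ into $4^l$ squares of side $N/2^l$. The domain Markov property of the GFF gives a decomposition
\beq
X_x^N = h_x^{(K)} + \sum_{l=1}^{K} Y_x^l,
\eeq
where $Y_x^l$ depends only on the boundary values of the level-$l$ sub-box containing $x$, the $Y^l$'s are mutually independent Gaussian fields, and $h_x^{(K)}$ is the harmonic extension from the finest grid. This is the analogue of the block decomposition \eqref{multiscale} in the hierarchical field. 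A standard computation with the Green function shows that for $x$ in the ``deep interior'' of its level-$l$ sub-box, $\mathrm{Var}(Y_x^l) = g\log 2 + O(1/N^{\alpha})$, and the total variance of $X_x^N$ is $gK\log 2 + O(1) = g\log N + O(1)$.

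\textbf{Step 2 (Decoupling at mesoscopic scales and the REM approximation).} If $x,y$ belong to different level-$l$ sub-boxes, then $Y_x^l$ and $Y_y^l$ are independent by construction: this is the exact analogue of the decoupling expressed in \eqref{ind}. If $x,y$ sit in the same level-$l$ sub-box but in different level-$(l+1)$ sub-boxes, one only uses $\mathbb{E}[Y_x^l Y_y^l]\leq \mathrm{Var}(Y_x^l)$, i.e.\ the REM approximation \eqref{bound}. To avoid boundary corrections, restrict attention throughout to the sub-collection $V_N^{\mathrm{int}}\subset V_N$ of points lying at distance at least $c \cdot N/2^l$ from the boundary of their level-$l$ sub-box for every $l$; the cardinality $|V_N^{\mathrm{int}}|$ is still $\sim N^2$ up to a multiplicative constant, and hence $\log|V_N^{\mathrm{int}}| = 2\log N + O(1)$.

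\textbf{Step 3 (Counting variable and Paley-Zygmund).} Given $\eta>0$, choose $K=K(\eta)$ large, set $\lambda_l = \sqrt{2\log 4}\cdot\sqrt{g\log 2}\,(1-\eta) = 2\sqrt{g}\log 2\,(1-\eta)$ for $l=2,\dots,K$, and define
\beq
\mathcal N_N \defi \#\bigl\{ x\in V_N^{\mathrm{int}}:\ Y_x^l \geq \lambda_l\ \text{for all}\ l=2,\dots,K\bigr\},
\eeq
mirroring \eqref{getgoing} (the first level is dropped to gain independence and break self-similarity, just as in Proposition~\ref{lower_prop}). Standard Gaussian estimates give $\mathbb E\mathcal N_N \cong N^{2}\exp(-\sum_{l\geq 2}\lambda_l^2/(2\,\mathrm{Var}(Y^l)))$, which diverges exponentially in $\log N$ as long as $\lambda_l < 2\sqrt{g}\log 2$. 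For the second moment, arrange the double sum according to the coarsest scale $r$ at which two points first lie in different sub-boxes: the $r=0$ contribution is $\leq (\mathbb E\mathcal N_N)^2$ by independence, while for each $r\geq 1$ the REM approximation within scales $2,\dots,r$ plus exact independence beyond gives a contribution of order $4^{(2K-r)N/K} \exp(-\tfrac12 K \sum_{j\leq r+1}\lambda_j^2 - K\sum_{j>r+1}\lambda_j^2)$, exactly as in \eqref{second_mom}. The same elementary bookkeeping as in \eqref{PZ} shows that $\mathrm{Var}(\mathcal N_N)/(\mathbb E\mathcal N_N)^2 \to 0$ exponentially fast for $\lambda_l < 2\sqrt{g}\log 2$, and Paley-Zygmund yields $\mathcal N_N > 0$ with overwhelming probability. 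On this event there exists $x\in V_N^{\mathrm{int}}$ with $\sum_{l=2}^K Y_x^l \geq (K-1)\cdot 2\sqrt{g}\log 2(1-\eta) = 2\sqrt{g}\log N\,(1-\eta)(1-1/K)$; subtracting a single level costs at most $O(\sqrt{\log N})$ (symmetry + union bound over one level, as in \eqref{symmetry}), which is $o(\log N)$. Taking $K\to\infty$ then $\eta\to 0$ concludes the lower bound.

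\textbf{Main obstacle.} The genuinely delicate point is Step~1: the Markov decomposition of the GFF is not hierarchically organized in the sharp sense of \eqref{hierarchical_field}, because $Y_x^l$ depends on \emph{all} level-$l$ boundary values, including points near the internal partition lines where harmonic-measure effects distort the variance. Controlling these boundary corrections uniformly over all $l\leq K$ requires quantitative discrete potential-theory estimates (Green function expansions, Beurling-type harmonic measure bounds). The ``deep interior'' restriction to $V_N^{\mathrm{int}}$ absorbs the bulk of this difficulty: once one pays this sub-exponential-in-$\log N^2$ entropy cost, the covariance structure on $V_N^{\mathrm{int}}$ is close enough to the branching random walk structure for the multiscale machinery of Chapter~2 to run verbatim.
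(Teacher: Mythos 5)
Your overall architecture (Markov/dyadic multiscale decomposition, decoupling across boxes, REM approximation within boxes, restriction away from boundaries, Paley--Zygmund on a counting variable that drops the first scale) is exactly the route the paper sketches, but your scale bookkeeping contains a genuine error that breaks the argument. You fix $K$ depending only on $\eta$ and then use one dyadic scale per level (sub-boxes of side $N/2^l$, per-level variance $\approx g\log 2$). With $K$ fixed, these $K$ levels capture only total variance $gK\log 2=O(1)$, not $g\log N$, so the event $\mathcal N_N>0$ only produces a point of height $O(1)$; your identity ``$gK\log 2+O(1)=g\log N+O(1)$'' silently forces $K=\log_2 N$, contradicting ``$K$ fixed''. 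If instead you really take $K=\log_2 N$, i.e.\ impose the threshold $\lambda_l=2\sqrt{g}\log 2\,(1-\eta)$ at \emph{every} dyadic scale, the first moment already vanishes: each per-scale event has probability $p=\PP[Z\geq \sqrt{2\log 4}\,(1-\eta)]$, and $4p<1$ for small $\eta$ (the Gaussian polynomial prefactor, harmless on the exponential scale when a block spans many scales, costs a fixed constant per scale, and over $\log_2 N$ scales this loss is of order $\log N$ in the exponent). So $\mathbb E\mathcal N_N\to 0$ and Paley--Zygmund cannot give $\mathcal N_N>0$. This is precisely the trap the coarse graining is designed to avoid: the exponential-scale estimates behind \eqref{bound}--\eqref{ind} and the bookkeeping in \eqref{PZ} are only valid when each block carries a \emph{diverging} variance.

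The repair is the one the paper uses: keep $K$ fixed (depending on $\eta$) but let each of the $K$ blocks consist of $\log_2(N)/K$ consecutive dyadic scales, i.e.\ condition on boxes of side $N^{1-l/K}$, $l=1,\dots,K$. Then each block increment is (up to the boundary corrections controlled by \eqref{var_control} on your interior set) a centered Gaussian of variance $\sim g\log N/K$, the thresholds become $\lambda_l=2\sqrt g\,(1-\eta)\log N/K$, the per-block probabilities are genuinely in the large-deviation regime with subexponential (in $\log N$) prefactors, and the computation \eqref{max_mod}--\eqref{PZ} transfers verbatim, including the cost of dropping the first block and controlling its minimum as in \eqref{getting_rid}--\eqref{symmetry}. (Relatedly, your closing estimate that removing one level costs $O(\sqrt{\log N})$ is an artifact of the constant-variance blocks; with the correct blocks the cost is the deterministic factor $(1-1/K)$ plus a first-block minimum of order $\log N/K$, absorbed by sending $K\to\infty$ after $N\to\infty$.) With this change, and with your interior restriction playing the role of the paper's $V_N^{\delta,K}$, the proof matches the intended one.
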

The $2^{nd}$-moment method described above provides a straightforward proof of this fact.  
(It also streamlines the more refined analysis by Daviaud \cite{daviaud} 
on the fractal structure of the sites where the GFF is large, but I will not go into that.)  The 
crucial observation \cite{bdg} is that the GFF admits a natural {\it multiscale decomposition}. 
To see how it goes, assume without loss of generality that $N=2^n$ and identify an integer $m = \sum_{l=0}^{n-1} m_i 2^i$ with its binary expansion $(m_{n-1}, m_{n-2}, \dots, m_0)$.
For $k\geq 1$ introduce the sets of $k-$diadic integers
$A_k = \{m\in \{1, \dots, N\}:\; m = (2l-1) N 2^{-k} \; \text{for some integer}\; l   \},$ and define the $\sigma$-algebras
$\mathcal A_k = \sigma\left(X_z^N: \; z = (x,y), \, x \, \text{or} \, y \in \cup_{i\leq k} A_i\right).$ For every $z= (x,y) \in V_N^o$ (the interior of $V_N$), write $z_i = (x_i, y_i)$ with 
$x_i, y_i$ denoting the $i$-th digit in the binary expansion of $x,y$. Introducing the random
variables $\xi_{z_{k+1}, \dots, z_n}^{z_1, \dots, z_k} \defi \mathbb{E}\left[X_z^N \mid \mathcal A_k \right]$, it holds
\beq  \label{first tel}
X_z^N = \xi^{z_{1}}_{z_2, \dots, z_k} +  \left(X_z^N - \xi^{z_{1}}_{z_2, \dots, z_k} \right) \defi \xi^{z_{1}}_{z_2, \dots, z_k} +     X_{z_2, \dots, z_n}^{z_1}.
\eeq
Furthermore, by the random walk representation of the covariance of the field one
sees that the collections $\left\{X_\cdot^{z_1}\right\}_{z_1 \in V_1}$ are i.i.d. copies of the GFF in the box $V_{N/2}$. I will refer to this feature as the self-similarity of the field. 
Thanks to the self-similarity, one may therefore iterate the 
procedure to get the equivalent of \eqref{multiscale}:
\beq \label{multiscale_full}
X_z^N = \xi^{z_{1}}_{z_2, \dots, z_k}+\xi^{z_{1}, z_2}_{z_3, \dots, z_k}+ \dots + \xi^{z_{1}, z_2, \dots, z_{N-1}}_{z_N} \,,
\eeq
with the summands on the r.h.s. being independent. Moreover, the field decouples at mesoscopic scales: 
$\xi^{z_{1}, \dots, z_k}_{z_{k+1}, \dots, z_n} \; \text{and}\; \xi^{w_{1}, \dots, w_k}_{w_{k+1}, \dots, w_n}$
are {\it independent} as soon as  $(z_{1}, \dots, z_k) \neq (w_{1}, \dots, w_k)$. This is   
the equivalent of \eqref{ind}, and the triviality of the free field.

The analogy with the hierarchical field runs even deeper. In fact, the full set of hierarchies is {\it not} required for 
the leading order of the maximum, with a coarse-grained version fullfilling the needs: choose $K$ such that $N/K$ is an integer, 
set $j_l = l N/K,$ where $l=1, 2,\dots$ and 
rewrite \eqref{multiscale_full} according to the coarser family of sigma-algebras $\mathcal A_{N/K}, 
\mathcal A_{2N/K}, \dots$ and write, 
\beq \label{klevels}
X_z^N = \sum_{l=1}^K \hat \xi^{z_1, \dots, z_{j_l}}_{z_{j_l+1}, \dots, z_N}
\eeq 
with $\hat \xi$ being defined in full analogy as above. Now, the $\hat \xi$-fields have a complicated correlation structure 
whenever two sites fall into the same box, but for this the REM-approximation provides an easy way out, and a rerun of \eqref{max_mod}-\eqref{PZ} 
immediately (up to a technicality which I am discussing below) 
settles the proof. 

The technical issue stems from certain boundary effects which are however mild for the 
purpose of establishing the leading order of the maximum. Indeed, 
the method presented in these notes
requires one quantitative input only: a fair control of the second moment of the underlying random variables.  
It is known \cite{bdg} that
\beq \label{upper_var}
\sup_{x\in V_N} \mathbb{E}\left[(X_x^N)^2 \right] \leq g \log N+c\,,
\eeq
but an equivalent lower bound breaks down when $x$ approaches the boundary $\partial V_{N}$. To surmount this
obstacle one picks $\de>0$ and introduces $V_N^\de \defi \left\{x \in V_{N}: \; \text{dist}(x, \partial V_{N}) \geq \de N \right\}$. 
It is known that for $\de \in (0, 1/2)$ there exists $c(\de)>0$
\beq \label{var_control}
g\log N - c(\de)\leq  \mathbb{E}\left[(X_x^N)^2 \right]\leq  g\log N + c(\de)
\eeq
for all $x\in V_N^\de$, see e.g. \cite{bdg}. Of course, such boundary effects arise at each step in a coarse-graining with, say, $K$ levels. 
For this it however suffices to ''stay away'' from the respective boundaries, and this can be achieved by simply
applying the method outlined above to the maximum restricted to a subset  $V_N^{\delta, K}\subset V_N$ only, such as
the shaded region in Figure \ref{figu} below. 
\begin{figure}[h] 
\begin{center}
\includegraphics[scale=0.4]{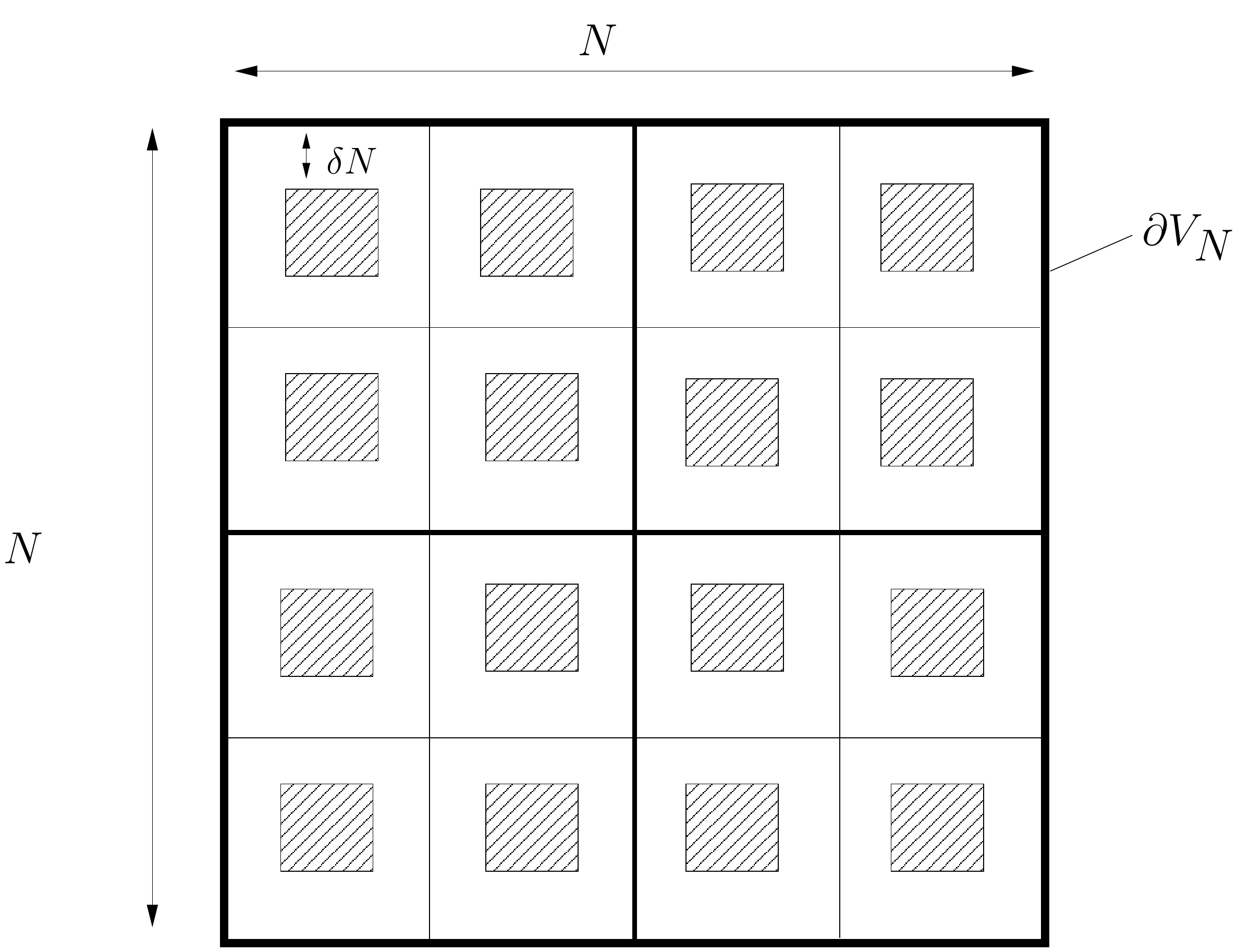}
\caption{Subset of sites for coarse graining with $K=3$ levels.}
\label{figu}
\end{center}
\end{figure}

The matching allows  us to make educated guesses on the subleading order of the maximum, and how the field manages to overcome correlations. To this end, let
\beq 
a_N \defi 2 \sqrt{g} \log N + C \log \log N. 
\eeq
Consider also the extremal process 
\beq 
\Xi_N \defi \sum_{x \in V_N} \de_{X_x^N - a_N}
\eeq
The naive approach would amount to requiring that for compact $A\subset \R$, 
\beq
\mathbb{E} \Xi_N(A) \sim 1, \quad N\to \infty,
\eeq
and simple computations show that this is equivalent to 
\beq
N^2 \exp\left( - \frac{a_N^2}{2 g \log N} \right) \frac{1}{\sqrt{\log N}} \sim 1, \quad N\to \infty.
\eeq
This would yield 
\beq 
a_N = 2 \sqrt{g} \log N - \frac{1}{4} \sqrt{g} \log \log N.
\eeq
This is of course wrong: we are completely dismissing correlations. Based on the analogy with the 
hierarchical field, ensuing from the multiscale decomposition \eqref{multiscale_full}, we shall require that "extremal sites" also satisfy the requirement 
\beq
\sum_{i=1}^l \xi^{z_{1}, \dots, z_{1, \dots i}}_{z_{i+1}, \dots, z_k} \leq 2 l \sqrt{g}  , \quad l= 1\dots, \log (N-1)\,,
\eeq
One can check that for a given site $z\in V_N$, the probability of this event under the conditioning that 
$X_z^{N} \approx a_N$, behaves to leading order as $1/\log N$ and therefore the matching would 
read 
\beq 
N^2 \frac{1}{\log N}\exp\left( - \frac{a_N^2}{2 g \log N} \right) \frac{1}{\sqrt{\log N}} \sim 1\,,
\eeq
which holds for 
\beq 
a_N = 2 \sqrt{g} \log N - \frac{3}{4} \sqrt{g} \log \log N \,.
\eeq
(Which is the same answer one gets for the maximum of BBM after suitable parametrization.)
This turns out to be correct; the reader is referred to the lecture notes \cite{zeitouni} and references therein for details.

\section{First/last passage percolation on the binary tree}
Gaussianity of the involved random variables is not essential for the method to
work. The following example illustrates that 
fairly good large deviations estimates only are needed. 
We consider again the binary tree $T_N$ and
\beq 
E_{\alpha} \defi e_{\alpha_1}+\dots + e_{\alpha_1, \dots, \alpha_N}
\eeq
where now the $e's$ exponentially distributed with mean one (and independent of
each other). We introduce Aldous' terminology of the {\it natural outer bound} \cite{aldous}, namely the 
$C_\star$ such that 
\beq \bea 
& 2^N \PP\left[ E_{\boldsymbol 1} > c N\right] \to 0 \qquad \text{for}\; c > C_\star\, ,\\ 
& 2^m \PP\left[ E_{\boldsymbol 1} > c N\right] \to \infty \qquad \text{for}\; c < C_\star.
\eea \eeq

Denoting by $I(x) \defi x-1-\log x$ the {\it rate function} of the exponential, 
we identify the outer bound as the solution to  
\beq
\log 2 = I(c), \quad \text{i.e.} \quad 2 c = e^{c-1}. 
\eeq
There are two solutions to this equation, $C_\star^1 < 1 < C_\star^2 < \infty$. The solution $C_\star^2$ corresponds to the last passage percolation (LPP) problem, whereas $C_\star^1$ is a first passage percolation (FPP). 
A rerun of the computations presented above (with obvious modifications for the FPP) 
immediately yields the following result:

\begin{prop} With the above notations,
\[
\lim_{N\to \infty} \frac{1}{N} \min_\alpha E_{\alpha} = C^1_{\star},\quad \lim_{N\to \infty} \frac{1}{N} \max_\alpha E_{\alpha} = C^2_{\star} 
\]
in probability.
\end{prop}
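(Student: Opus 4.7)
The plan is to recycle the multiscale refinement of Proposition \ref{lower_prop}, with the Gaussian tail estimates replaced by Cram\'er's theorem for sums of i.i.d.\ exponentials. The rate function $I(x) = x - 1 - \log x$ plays the role of the quadratic exponent; by the very definition of the natural outer bound, the two critical values $C_\star^1 < 1 < C_\star^2$ are the solutions of $\log 2 = I(c)$, living respectively on the decreasing branch $(0,1)$ and the increasing branch $(1,\infty)$ of $I$. Both extremes are handled in parallel.

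The outer bounds follow from a union bound and Cram\'er's theorem: for any $\vare > 0$,
\beq
\PP\!\left[\max_\alpha E_\alpha \geq (C_\star^2+\vare)N\right] \leq 2^N\, \PP\!\left[E_{\boldsymbol 1}\geq (C_\star^2+\vare)N\right] \lesssim \exp N\bigl(\log 2 - I(C_\star^2+\vare)\bigr),
\eeq
which is exponentially small by strict monotonicity of $I$ on $(1,\infty)$ and the defining identity $I(C_\star^2) = \log 2$. An identical argument, using strict monotonicity of $I$ on $(0,1)$ and $I(C_\star^1) = \log 2$, disposes of $\PP[\min_\alpha E_\alpha \leq (C_\star^1-\vare)N]$.

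For the matching inner bounds I would run the multiscale second moment method almost verbatim. Fix a large $K$ with $N/K \in \N$, split each path of length $N$ into $K$ blocks of length $N/K$, and write $E_\alpha = \sum_{l=1}^K E_\alpha^l$ with $E_\alpha^l$ the sum of the $N/K$ unit exponentials along the $l$-th block. For the max, choose $\lambda_l = C_\star^2(1-\vare)/K$ and consider
\beq
\mathcal N_N(\lambda_2,\dots,\lambda_K) \defi \sharp\bigl\{\alpha\in T_N: E_\alpha^l \geq \lambda_l N,\ l=2,\dots,K\bigr\}.
\eeq
Cram\'er at the block scale gives $\PP[E_{\boldsymbol 1}^l \geq \lambda_l N] \cong \exp(-\tfrac{N}{K} I(K\lambda_l))$, so $\mathbb{E}[\mathcal N_N]$ is exponentially large since $K\lambda_l = C_\star^2(1-\vare)$ lies on the right branch of $I$ with $I(K\lambda_l) < \log 2$. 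Splitting the second moment by the last block $j_r$ at which two paths coincide, invoking the REM approximation within the branching block (i.e.\ bounding the joint tail by the single-path tail) and genuine independence beyond it, one exactly reproduces the structure of the quantity $\mathfrak{R}_N$ in the hierarchical field proof; it is exponentially small under the same inequality $I(K\lambda_l) < \log 2$ at every block. Paley--Zygmund then gives $\PP[\mathcal N_N > 0] \to 1$, and positivity of the increments yields $\max_\alpha E_\alpha \geq \sum_{l=2}^K E_\alpha^l \geq (1-1/K)(C_\star^2-\vare)N$ on this event; sending $K \to \infty$ and $\vare \to 0$ closes the max case.

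The main obstacle will be the minimum, where the positivity of $E_\alpha^1$ goes the wrong way and one cannot simply drop the first block for free. I would fix this by keeping the first block inside the counting variable, imposing $E_\alpha^l \leq \lambda_l N$ for all $l=1,\dots,K$ with $K\lambda_l = C_\star^1(1+\vare)$, so that $I(K\lambda_l) < \log 2$ holds on the left branch of $I$; the only additional term in the second moment comes from the case $[\alpha]_{j_1} = [\alpha']_{j_1}$, which is again controlled by the REM approximation applied to block $1$ and remains exponentially small under the same inequality. A cleaner alternative would be to apply the multiscale argument recursively: first to the $N/K$-long subtree indexing $[\alpha]_{j_1}$, locating a near-optimal first block with $E^1 \approx C_\star^1 N/K$, and then conditionally running Paley--Zygmund on blocks $2,\dots,K$ within the subtree rooted there. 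Either route completes the proof after the routine $K \to \infty$, $\vare \to 0$ limit.
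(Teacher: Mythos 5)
Your outer bounds and your treatment of the maximum are correct and are exactly the ``rerun with obvious modifications'' that the paper intends: union bound plus Cram\'er for the outer bounds, and for the inner bound the coarse graining into $K$ blocks, the counting variable over blocks $2,\dots,K$ only, the REM approximation inside the branching block, and Paley--Zygmund. For the max, positivity of the weights even simplifies the Gaussian scheme of Proposition \ref{lower_prop}: the step \eqref{getting_rid}--\eqref{symmetry}, which controls the unconstrained first block from below, is not needed since $E^1_\alpha\geq 0$.

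The genuine gap is in your primary route for the minimum. Once block $1$ carries a constraint, the dangerous pairs are not those with $[\alpha]_{j_1}=[\alpha']_{j_1}$ but those branching at depth $d$ strictly \emph{inside} block $1$: for these, the block-$1$ events of $\alpha$ and $\alpha'$ are correlated, so this part of the sum is not dominated by $\mathbb{E}[\mathcal N_N]^2$, and it is not rescued by the REM approximation. With your bound, the number of such ordered pairs is $\approx 2^{2N-d}$ while paying the block-$1$ cost only once loses a factor $1/\PP[E^1\leq \lambda_1 N]\cong e^{\frac{N}{K} I(C_\star^1(1+\vare))}\approx 2^{N/K}$ that does not decay in $d$, so these terms are exponentially larger than $\mathbb{E}[\mathcal N_N]^2$ and Paley--Zygmund yields nothing. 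Nor is this an artifact of a lossy estimate: the cheapest way for two paths sharing $d=\Theta(N)$ early edges to both have a small block-$1$ sum is to make the \emph{shared} segment unusually small (cost paid once, benefit reaped twice); a short computation with $I(x)=x-1-\log x$ at $K\lambda_1\approx C_\star^1\approx 0.23$ shows the resulting contribution, relative to $\mathbb{E}[\mathcal N_N]^2$, grows like $e^{cd}$ with $c>0$ (the naive factorization criterion $2\log(1/C_\star^1)<\log 2$, i.e.\ $C_\star^1>1/\sqrt{2}$, fails badly). This is exactly the phenomenon the paper's proof avoids by excluding the first block in \eqref{getgoing}. The fix keeps you inside the paper's scheme: for the min as well, constrain only blocks $2,\dots,K$ (or restrict the count to the subtree below one \emph{fixed} vertex at depth $N/K$). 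On $\{\mathcal N_N>0\}$ the selected path gives $\min_\beta E_\beta\leq E^1_\alpha+\tfrac{K-1}{K}C_\star^1(1+\vare)N$, and the uncontrolled first block is at most the maximum over the $2^{N/K}$ vertices at depth $N/K$, which is $O(N/K)$ with high probability by the same union bound used for the outer bound, now at scale $N/K$; this error vanishes as $K\to\infty$. Your ``cleaner alternative'' is workable in spirit, but as stated it presupposes the FPP result at scale $N/K$ (to locate a near-optimal first block), so it would need an induction over scales --- and it is unnecessary, since any $O(N/K)$ first block suffices.
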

I am unable to track down the first place where this result appeared - but it is definitely already 
present in \cite{aldous}. The identification of the subleading orders is an application of the method of matching and is left to the reader as an instructive exercise.

\section{Percolation on the hypercube}
Here is an example where neither Gaussianity, nor hierarchical structure (not even at "mesoscopic" 
level, such is the case of the GFF) are available but which still falls in the REM-class. Consider the 
unit cube $\{0,1 \}^N$ in $N$ dimensions. To each edge we attach independent exponential random 
variables $\xi$ with mean one. 
We write $\boldsymbol 0 = (0,0, \dots, 0)$ and $\boldsymbol 1 = (1,1,\dots,1)$
for diametrically opposite vertices. As an example, the $(N=10)$-dimensional hypercube is shown in 
Figure \ref{diamond} below.

\begin{figure}[h] 
\begin{center}
\includegraphics[scale=0.4]{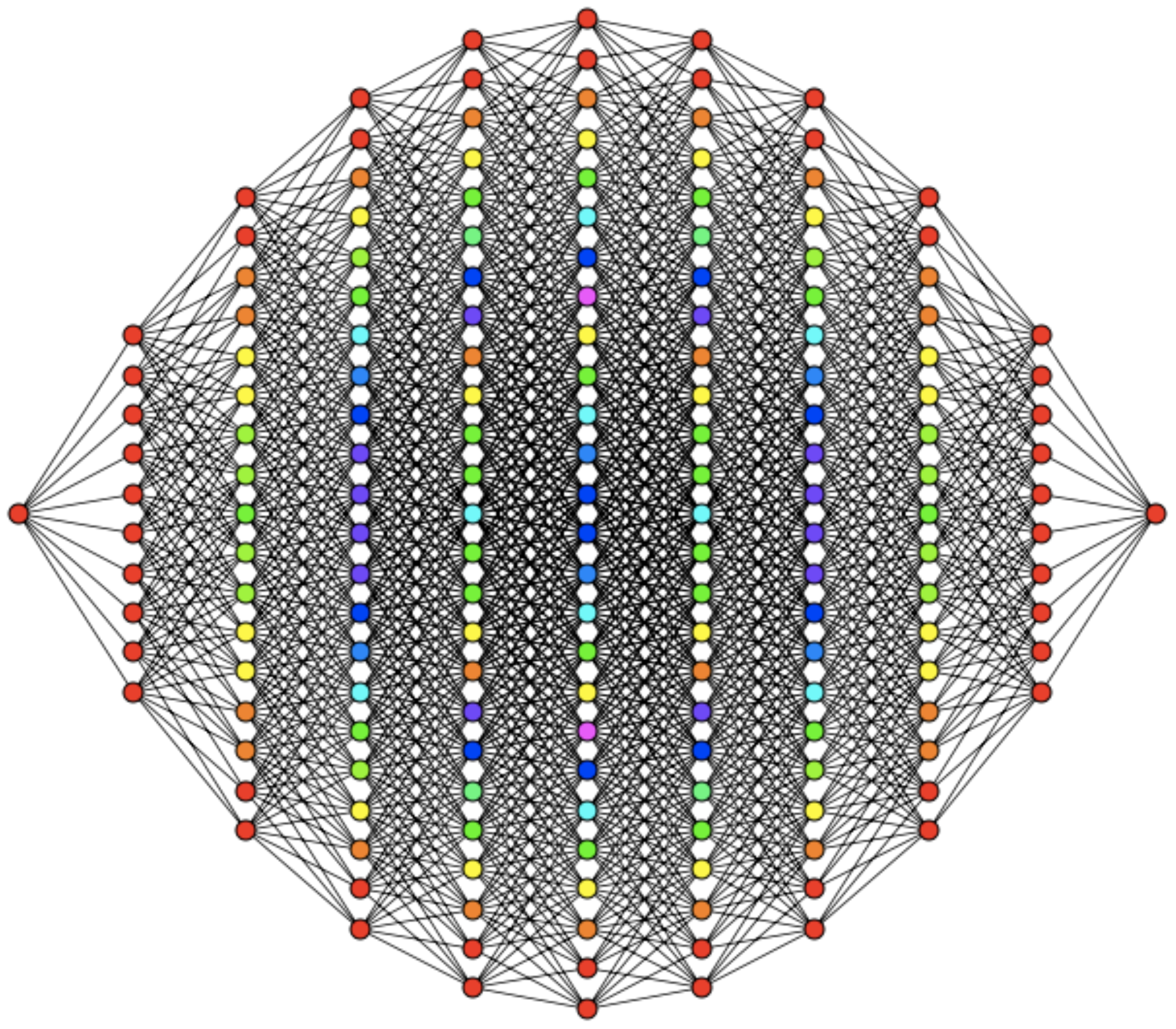}
\caption{The hypercube in $N= 10$ dimensions}
\label{diamond}
\end{center}
\end{figure}

Let now $\Pi_N$ be the set of paths of length $N$ from $\boldsymbol 0$ 
to $\boldsymbol 1$. Each $\pi  \in \Pi_N$ is of the form $\boldsymbol 0 = v_0, v_1, \dots, v_N = \boldsymbol 1$.  For any such paths, we may consider the sum of edge weights 
\beq
X_\pi \defi \sum_{(v_j, v_{j-1}) \in \pi} \xi_{v_{j-1}, v_j} 
\eeq
A quantity of interest is the minimal weight $m_N \defi \inf_{\pi \in \Pi_N} X_\pi$ 
when $N\to \infty$. This is a first passage percolation problem in the "mean field limit". 
To get a feeling of what is going on here, let us consider a reference path 
$\pi \in \Pi_N$ and $x\in \R_+$: by Markov inequality (union bound) we have
\beq \bea \label{diamond_one}
\PP\left[ m_N \leq x   \right] \leq (N!) \PP\left[ \sum_{(v_j, v_{j-1}) \in \pi} \xi_{v_{j-1}, v_j}  \leq x\right]
\eea \eeq
The sum of $N$ independent (mean one) exponentials $\xi_1, \dots, \xi_N$ is a well-studied 
object (one even has a closed form for its distribution - the Erlang distribution); in particular, the following asymptotics is easily checked
\beq
\PP\left[ \sum_{i=1}^N \xi_i \leq x \right] \sim e^{-x} \frac{x^N}{N!}
\eeq
Using this in \eqref{diamond_one}, we see that $\PP\left[ m_N \leq x \right] \to 0$ for $x< 1$. It turns out that this simple "REM-bound" is tight:
\begin{fact} 
\beq \label{aldous_fill_pemantle}
\lim_{N\to \infty} m_N = 1,
\eeq 
in probability. 
\end{fact}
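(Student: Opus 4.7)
The direction $m_N \ge 1 - \eta$ w.h.p.\ (for every $\eta > 0$) is already delivered by the Markov/union bound in \eqref{diamond_one}, combined with the Erlang asymptotic $\PP[\sum_{i=1}^N \xi_i \le x] \sim e^{-x} x^N / N!$: this yields $\PP[m_N \le x] \lesssim e^{-x} x^N \to 0$ for $x < 1$. The remaining task is the matching upper bound $m_N \le 1 + \eta$ w.h.p., which I would attack by the multiscale refinement of the second moment method, following the recipe of Section \ref{rem machinery}. Give oneself an $\eta$ of room, pick a large $K = K(\eta) \in \N$ with $N/K$ integer (WLOG), and for each $\pi \in \Pi_N$ coarse-grain into $K$ blocks of $N/K$ edges with $X_\pi = \sum_{\ell=1}^K X_\pi^{(\ell)}$.

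The relevant counting variable is
\beq
\mathcal N_N \defi \sharp\left\{\pi \in \Pi_N : \, X_\pi^{(\ell)} \le (1+\eta/2) N/K, \; \ell = 1, \dots, K\right\},
\eeq
so that $\{\mathcal N_N > 0\} \subset \{m_N \le (1+\eta/2) N\}$. Independence of the $K$ block sums (they involve disjoint edges), together with the standard Erlang lower-tail estimate above the mean, gives
\beq
\mathbb{E}[\mathcal N_N] = N! \prod_{\ell=1}^K \PP[X_\pi^{(\ell)} \le (1+\eta/2) N/K] \gtrsim N! \cdot (c_\eta \sqrt{K/N})^K,
\eeq
which is super-exponentially large for fixed $K$.

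For the second moment I would rearrange $\mathbb{E}[\mathcal N_N^2]$ by the largest block index $r \in \{0, 1, \dots, K-1\}$ through which $\pi$ and $\pi'$ trace the same vertex sequence (equivalently, agree as permutations on their first $rN/K$ flips). For such a pair the initial $rN/K$ edges are shared, to be treated by the \emph{REM approximation} $\PP[A,A] = \PP[A]$ (dropping the second constraint on the common block), while the remaining $K-r$ blocks are mutually independent, restoring the product structure of the first moment. The combinatorial count of pairs with prefix overlap of length $rN/K$ is at most $N! \cdot (N-rN/K)!$, exponentially small in $rN/K$ after dividing by $(N!)^2$, which offsets the gain in the probability factor once $K$ is taken large. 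The $r=0$ contribution is bounded by $(\mathbb{E}[\mathcal N_N])^2$; the trick of not constraining the first block (breaking the self-similarity, as in the GREM argument) can be invoked to tighten this comparison. Paley--Zygmund then yields $\PP[\mathcal N_N > 0] \to 1$, possibly after a concentration bootstrap since $m_N$ is a stable statistic of the independent edge weights, and sending $\eta \downarrow 0$ after $N \to \infty$ proves \eqref{aldous_fill_pemantle}. The main obstacle is the combinatorial bookkeeping of overlap in this second-moment step: unlike on the binary tree, two hypercube paths that diverge at some step can later revisit a common vertex. This does not create new shared edges (so the probabilistic factor is unchanged), but it complicates the count of pairs whose edge intersection is not a prefix; such "interleaved" overlaps have to be absorbed by a coarser worst-case REM bound, which is ultimately what forces $K$ to be chosen large (depending on $\eta$) in order to close the exponent.
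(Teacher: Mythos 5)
The half of your argument that coincides with the paper is only the easy half: the paper itself proves just the lower bound $\PP[m_N\le x]\le e^{-x}x^N\to 0$ for $x<1$ by the union bound, and for the matching upper bound it does not give a proof at all, referring instead to Fill and Pemantle's conditional second moment method with variance reduction (and to \cite{akz} for the multiscale route you are attempting). Your execution of that route has a scale error that makes it prove nothing: since $m_N\to 1$, a near-optimal path must carry \emph{total} weight $O(1)$, i.e.\ about $(1+\eta)/K$ per block of $N/K$ edges, whereas your counting variable imposes $X_\pi^{(\ell)}\le (1+\eta/2)N/K$, so that $\{\mathcal N_N>0\}$ only yields the vacuous bound $m_N\le(1+\eta/2)N$ (every path has weight $\approx N$). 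Your first-moment estimate $N!\,(c_\eta\sqrt{K/N})^K$ confirms you are working at the CLT scale of the block sums; at the correct thresholds the block probability is the lower-deviation quantity $\sim((1+\eta/2)/K)^{N/K}/(N/K)!$, and $\mathbb{E}[\mathcal N_N]$ is only of order $(1+\eta/2)^N$, which changes the entire bookkeeping.

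Even after correcting the scale, the second-moment step is not the routine rearrangement you describe, and the obstruction is not the ``interleaved overlaps'' you single out but the corners. Pairs of paths sharing just the first edge number about $N!\,(N-1)!$, and their joint probability is of order $x^{2N-1}e^{-2x}/\big((2N)((N-1)!)^2\big)$, so their contribution to the second moment is a \emph{constant} multiple of $(\mathbb{E}\mathcal N_N)^2$ (similarly for short shared stretches at $\boldsymbol 1$); hence Paley--Zygmund gives only $\PP[m_N\le 1+\eta]\ge c>0$, not convergence to one. This is precisely why Fill--Pemantle needed variance reduction (conditioning near the corners) and why Aldous proposed branching-process comparisons; dropping the first block, as in the GREM trick, must at least be done at both corners, one must then control the cost of traversing the unconstrained blocks (the analogue of \eqref{getting_rid}--\eqref{symmetry}), and your appeal to ``a concentration bootstrap since $m_N$ is a stable statistic'' is unsupported -- no standard concentration inequality localizes $m_N$ to an $o(1)$ window here. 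Finally, your claim that two paths which diverge and later revisit a common vertex create no new shared edges is false: after re-meeting they may traverse common edges, so the probabilistic factor does change; this is a genuine (if secondary) complication of the non-tree geometry that the worst-case REM bound does not automatically absorb.
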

The above \eqref{aldous_fill_pemantle} has been conjectured by Aldous \cite{aldous}, who also suggested that a good way to tackle the problem is by means of  {\it non-homogeneous dependent branching processes}. The conjecture has been settled 
 by Fill and Pemantle \cite{fill_pemantle}, although following a different route (a conditional second moment method with "variance reduction"). On the other hand, the multiscale refinement of the second moment method discussed above seems to be particularly suited to carry out Aldous' original strategy \cite{akz}.

\chapter{How to identify scales?}

\section{Musing on the Riemann $\zeta$-function}
Fyodorov, Keating and Hiary \cite{fyodorov_keating_hiary} have recently put forward the conjecture that the mechanism underlying the extremes of the 
hierarchical field might also be found in certain number-theoretical issues, such as the statistics of the extremes of the Riemann $\zeta$-function along the critical line. The latter is of course the function 
\beq
\zeta(s) = \sum_{n=1}^\infty \frac{1}{n^s} = \prod_{p\, \text{prime}} \left( 1- \frac{1}{p^s}\right)^{-1} \,.
\eeq
According the Riemann Hypothesis, the zeros of this function lie on the critical line $s= 1/2+ i t, t \in \R$. In general, many questions in the theory of the $\zeta$-function concern the distribution of values 
on the critical line. A theorem of Selberg \cite{selberg} states  that 
\beq
\lim_{T\to \infty} \frac{1}{T} \text{Leb}\left( T \leq t \leq 2 T: \alpha \leq \frac{\log \big|\zeta(1/2+i t) \big|}{\sqrt{\frac{1}{2} \log \log(\frac{t}{2\pi})}} \leq \be \right)
= \int_{\alpha}^\beta e^{-x/2} \frac{dx}{\sqrt{2\pi}} \,.
\eeq
In other words, 
\beq 
V_t(x) \defi - 2 \log \Big|\zeta(\frac{1}{2}+ i (t+ x))\Big|
\eeq
"behaves" as a Gaussian random field (indexed by $x$).
This random field is however {\it correlated}.  In fact, denoting by $\left< \cdot \right>$ the average over an interval $[t-h/2, t+h/2]$ such that 
$\frac{1}{\log t} \ll h \ll t$, it is informally demonstrated in \cite{fyodorov_keating} (see \cite{bourgade} 
for a rigorous proof) that
\beq
\lim_{t\to \infty} \left< V_t(x_1) V_t(x_2) \right> \approx \begin{cases}
- 2 \log |x_1-x_2|, &\text{for}\;  \frac{1}{\log t} \ll |x_1-x_2| \ll 1 \\
2 \log \log t, & \text{for}\;|x_1-x_2| \ll \frac{1}{\log t}
\end{cases}
\eeq
What is  important for the discussion here is the {\it logarithmic} form of the correlation. The simplest model which displays a logarithmic correlation structure is indeed the hierarchical field 
(upon identifiying the labels with their binary-expansions). Based on this insight, Fyodorov, Keating and Hiary \cite{fyodorov_keating_hiary} put forward some intriguing conjectures concerning the high values of the Riemann zeta-function. In particular, they consider 
\beq
\zeta_{\text{max}}(L, T) \defi \max_{0\leq x \leq L} V_T(x)
\eeq
and conjecture that, for $L \leq 2\pi$,
\beq
\zeta_{\text{max}}(L, T) \sim 2\log \left( \frac{L}{2\pi} \log \frac{T}{2\pi}\right) 
- \frac{3}{2} \log\log \left(  \frac{L}{2\pi} \log \frac{T}{2\pi}\right) 
\eeq
A similar formula but with the factor $3$ replaced by $1$ would hold if one assumes the 
random field to be independent: this is the hierarchical field vs. REM scenario. (Similar conjectures 
are also available for the characteristic polynomials of CUE random matrices, see  \cite{fyodorov_keating_hiary}.)\\

\section{On local projections}

It should be clear by now that the point of view discussed in the previous sections relies crucially on the identification of {\it scales} for the models at hand. Once these are identified, the (multiscale refinement of the) second moment method is well suited to address the question of the extremes. It goes without saying, different models come with varying degrees of technical difficulties, but certain models seem puzzling already at a conceptual level: where are the scales in the Riemann zeta-function? 
For the hierarchical field this question is easily answered: thanks to the in-built hierarchical structure,
one can even {\it visualize} the scales, and it is clear what is meant by, say, "larger scale".
On a structural level, the situation is also easy for the GFF, thanks to the Markovianity and the self-similarity of the field which lead to the representation \eqref{multiscale_full}. However, none of these properties are available in the case of the Riemann zeta-function, say. 
{\it Is there a general recipe which allows us to construct the scales, i.e. an underlying tree-like structure, from first principles?} This question should be taken with caution: identifying an underlying tree-structure is the foremost step in the implementation of the Parisi theory  \cite{parisi}. Despite the numerous advances in the field, see e.g. \cite{panchy, talagrand}, an understanding of this issue is yet nowhere in sight. On the other hand, models in the REM-class 
undergo what physicists refer to as the REM-freezing transition (a prominent feature is the lack of 
the so-called chaos in temperature). I will not go into any detail here, 
but I simply mention this to state the claim that for such models, a general recipe to identify scales might already be hiding in the approach discussed above. In fact, there is compelling evidence that the scales are related to certain {\it local projections}. To see this, 
let us go back to the GREM$(2)$, namely the model 
\beq \label{repr}
X_{\alpha} = X_{\alpha_1}^{(1)}+ X_{\alpha_1, \alpha_2}^{(2)}, 
\eeq
where $\alpha_1, \alpha_2 = 1 \dots 2^{N/2}$
and $X^{(1)}_\cdot \sim \mathcal N(0, a_1 N)$, $X_\cdot^{(2)} \sim \mathcal N(0, a_2 N)$, all independent. Again, we assume the normalization $a_1+a_2  = 1$.  The covariance of this Gaussian field naturally induces a metric on the configuration space 
$\Sigma_N \defi \{\alpha = (\alpha_1, \alpha_2): \alpha_1 = 1\dots 2^{N/2}, \alpha_2 = 1 \dots 2^{N/2}\}$: 
\beq 
d_N(\alpha, \alpha') \defi \sqrt{\mathbb{E} (X_\alpha - X_{\alpha'})^2} = \sqrt{N - q_N(\alpha, \alpha')}, 
\eeq
where $q_N(\alpha, \alpha') = \mathbb{E} X_\alpha X_{\alpha'}$ is the overlap of two configurations. Since 
the configuration space is endowed with a geometry  (eventually induced by the covariance structure), we may introduce the concept of {\it neighborhood} of a configuration. One possible definition is 
\beq
B_r(\alpha) \defi \{\alpha': q_N(\alpha, \alpha') \leq r N \} \,.
\eeq
for $\alpha \in \Sigma_N$, and $r\geq 0$.

As we have seen, a good way to tackle the extremes of GREM$(2)$  is to keep track of first {\it and} second level, which we achieved by introducing the counting 
random variable 
\beq 
\mathcal N_N(\la_1, \la_2) = \sharp \left\{ \alpha = (\alpha_1, \alpha_2): X_{\alpha_1}^{(1)} \geq 
\la_1 N, X_{\alpha_1, \alpha_2}^{(2)} \geq \la_2 N \right\}\,.
\eeq
Of course, knowing the specific representation \eqref{repr} is of help to identify first and second level. In many interesting models, however, the equivalent of \eqref{repr} is lacking, so it would 
be of interest to have a procedure which generates the levels from "first principles". Here is a way which  unravels the underlying tree-like structure using only the covariance of the field. 

Consider a configuration $\alpha = (\alpha_1, \alpha_2)$. The idea is to decompose telescopically 
\beq 
X_{\alpha} = \mathbb{E}\left[ X_{\alpha} \mid \mathcal F_r(\alpha) \right] + \left( X_{\alpha} - \mathbb{E}\left[ X_{\alpha} \mid \mathcal F_r(\alpha) \right]  \right)\,,
\eeq
where $\mathcal F_r(\alpha)$ is the sigma-field generated by all random variables which 
are in the $r$-neighborhood of $\alpha$. Due to the simple covariance structure of the 
GREM(2), there are only few sensible choices for the "radius", namely $r = a_1$ or $1$. In the case 
$r=1$ the neighborhood consists of the whole configuration space, so the only non-trivial choice is, in fact, $r= a_1$. Let us take a closer look at the conditional expectation: in this case
$\mathcal F_{a_1}(\alpha)$ is the $\sigma$-field generated by all random variables for which either
$\alpha_1' \neq \alpha_1$, or $\alpha_1' = \alpha_1$ but $\alpha_2' \neq \alpha_2$.
 But the random variables $X_\alpha$ and $X_{\alpha'}$ are independent as soon as $\alpha'_1 \neq \alpha_1$, so we may completely dismiss the collection$\{X_{\alpha'}, \alpha'_1 \neq \alpha_1\}$, i.e.
\beq
\mathbb{E}\left[ X_{\alpha} \mid \mathcal F_{a_1}(\alpha) \right] = 
\mathbb{E}\left[ X_{\alpha} \mid X_{\alpha'}: \alpha_1' = \alpha_1, \alpha_2' \neq \alpha_2\right].
\eeq
In other words, we are conditioning on those $\alpha'$ which share the 
first index with the reference configuration.  Since all involved random variables are Gaussian, conditional expectation are nothing but linear combinations of the random variables upon which one 
conditions: skipping the tedious calculations, one gets
\beq \bea
\mathbb{E}\left[ X_\alpha \mid \mathcal F_{a_1}(\alpha_1) \right]  & = \frac{1}{1 + 2^{-N/2} (1-a_1)}  X_{\alpha_1}^{(1)} +  \frac{2^{-N/2}}{a_1 + (1-a_1) 2^{-N/2}} \sum_{\tau = 1, \tau\neq \alpha_2}^{2^{N/2}} X_{\alpha_1, \tau}^{(2)}.
\eea \eeq
By the law of large numbers, the second term above is, in the large $N$ limit,  {\it exponentially} small, $\PP-$almost surely. Furthermore 
\beq
\frac{1}{1 + 2^{-N/2} (1-a_1)} = 1-O\left(2^{-N/2}\right),
\eeq
hence
\beq
\mathbb{E}\left[ X_{\alpha} \mid \mathcal F_{a_1}(\alpha)\right] = X_{1}^{(1)} + 
\Omega_N.
\eeq
where $\Omega_N$ is an exponentially small term, and consequently
\beq
X_{\alpha} - \mathbb{E}\left[ X_{\alpha} \mid \mathcal F_{a_1}(\alpha) \right]  = X_{\alpha_1, \alpha_2}^{(2)} + \Omega_N. 
\eeq
By locally projecting the field onto a neighborhood measured w.r.t. the metric 
induced by the covariance, we have thus constructed first and second level of the tree (up to errors which are completely irrelevant in the large $N$-limit). \\

Since local projections work smoothly in case of the GREM \footnote{One may follow analogous steps in order to identify the scales in a GREM$(K)$, for generic $K$. In this case one simply decomposes telescopically into a sum of $K$ terms ensuing from local projection on larger and larger neighborhoods. }, it is natural to test the method on the GFF. We will see that, indeed, it allows us to generate from first principles the hierarchical decomposition \eqref{multiscale_full}. To see this,  consider the GFF $\{X_\eta^N, \eta \in V_N\},$ where $V_N\subset \mathbb{Z}^2$ is a box of size $N$. In this case the overlap of two configurations (sites) is given by
\beq \label{gff_overlap}
q_N(\eta, \eta') \defi \mathbb{E} X_{\eta}^N X_{\eta'}^N = \frac{2}{\pi} \left( \log N - \log \| \eta- \eta' \|_2\right)
\eeq
where $\|\cdot \|_2$ is the Euclidean distance (at least for $\eta, \eta'$ far enough from the boundary of the box). Let $q\in \R_+$. Implementing the approach through local projections we get the 
following telescopic decomposition 
\beq
X_\eta^N = \mathbb{E}\left[ X_{\eta}^N \mid X_{\eta'}^N: \; q_N(\eta, \eta') \leq q \log N\right]
+ \left( X_{\eta}^N - \mathbb{E}\left[ X_{\eta}^N\mid X_{\eta'}^N: \; q_N(\eta, \eta') \leq q \log N\right] \right)
\eeq 
What is the "neighborhood" in case of the GFF? By \eqref{gff_overlap}, it holds
\beq 
\{\eta' \in V_N: \; q_N(\eta, \eta') \leq q \log N \} = \{\eta' \in V_N: \;  \| \eta'- \eta \|_2 \geq N^{(1-\pi q/2)} \},
\eeq
namely the {\it complement} of the Euclidean ball of radius $ N^{(1-\pi q/2)}$ centered in $\eta$. 
Denoting by $C_\eta(q)$ such Euclidean ball, by Markovianity of the GFF we see that conditioning the field upon the complement $C_{\eta}(q)^\texttt{c}$ coincides with conditioning the field on the {\it boundary} of $C_\eta(q)$. Local projections thus lead to the decomposition 
\beq \label{simple_wonder}
X_\eta^N = \mathbb{E}\left[ X_{\eta}^N \mid X_{\eta'}^N: \; \eta' \in \partial C_\eta(q) \right]
+ \left(X_\eta^N- \mathbb{E}\left[ X_{\eta}^N \mid X_{\eta'}^N: \; \eta' \in \partial C_\eta(q) \right] \right) ,
\eeq
for a $q$ which may be chosen as we wish. We hardly expect any difference between conditioning upon the sites which are on a (Euclidean) square or on a (Euclidean) ball, so the above representation can be safely identified with \eqref{first tel}.  Iterating the procedure for the second term in the telescopic 
decomposition \eqref{simple_wonder} (this step is particularly easy here, thanks to the self-similarity of the field) one then immediately obtains \eqref{multiscale_full}. \\

Since the procedure identifies from first principles the tree-structure underlying the GREM or that of the GFF, one can only wonder if local projections capture some fundamental aspects lying underneath the surface of models in the REM-class. A natural "playground" would be of course the case of the Riemann $\zeta$-function, or the characteristic polynomials of CUE random matrices \cite{fyodorov_keating_hiary, fyodorov_keating}.  On a more spin glass side,  it would be interesting to see if the local projections allow to identify the scales behind the Parisi landscape in finite-dimensional Euclidean spaces   which have been introduced in \cite{bouchaud_fyodorov}. These models have been rigorously analyzed in \cite{klimovsky}  by means of Guerra's interpolation scheme \cite{guerra} 
and by \cite{az} by means of the Ghirlanda-Guerra identities \cite{ghirlanda_guerra}, which are to these days among the most powerful yet mysterious tools in spin glasses. It would be interesting to have a complementary, more transparent approach. \\

Let me conclude with a caveat. The local projections discussed above should not be understood as a "frontal attack" to models in the REM-class. In fact, the procedure involves conditional expectations: 
these are particularly easy to handle if the underlying random variables are Gaussian, but they quickly become demanding/untractable otherwise. This is however a technical difficulty, as opposed to the more structural quest of bringing to the surface "hidden geometries". It is the emerging geometrical picture, and perhaps only this, which might be a good starting point for the analysis.

\bigskip
{N. KISTLER, Institut f\"ur Mathematik, Goethe-Universit\"at Frankfurt, DE-60054 Frankfurt am Main,
 \href{mailto:kistler@math.uni-frankfurt.de}{kistler@math.uni-frankfurt.de}}
\vfill \hfill \includegraphics[scale=0.06]{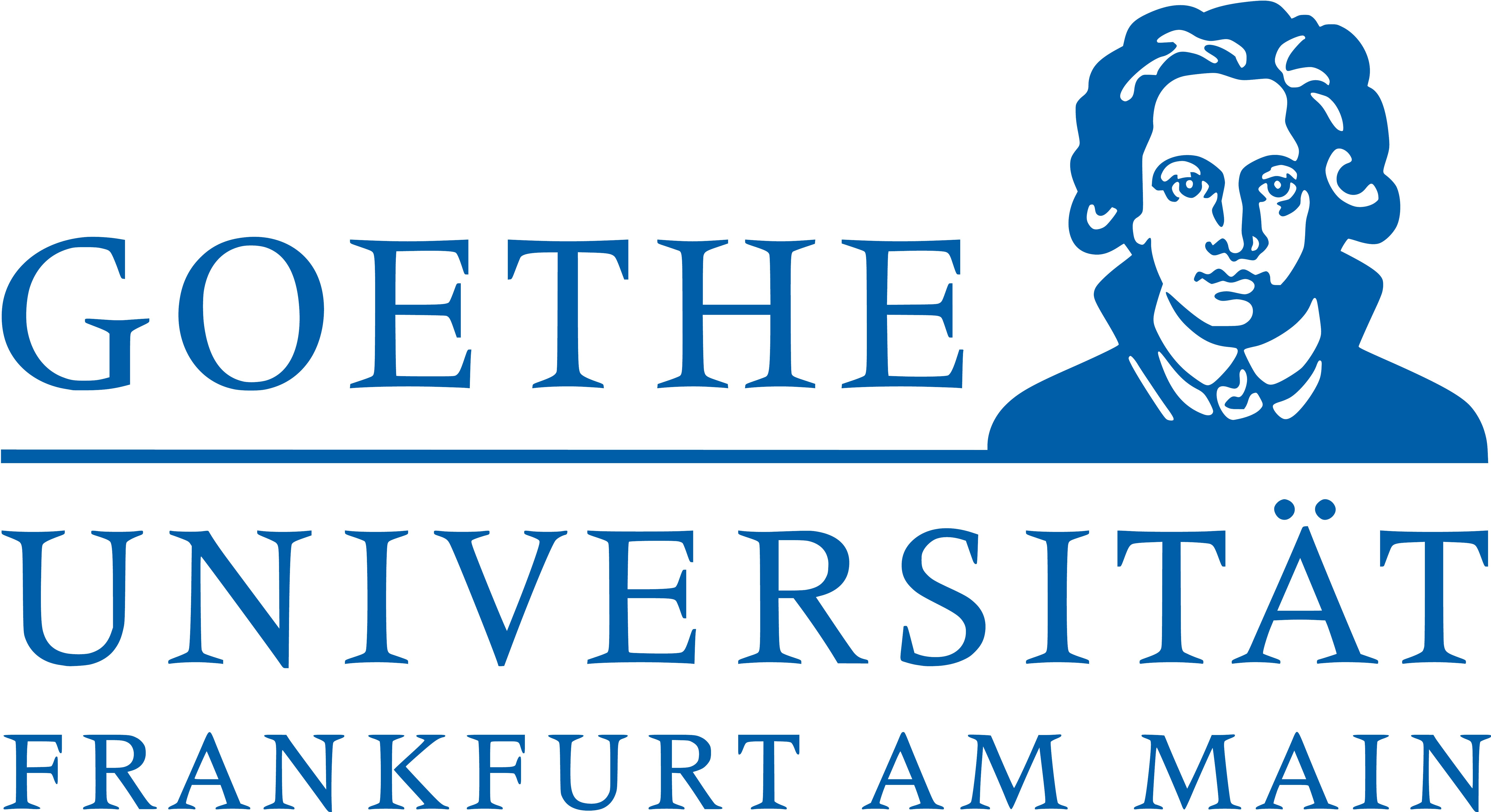}



\end{document}